\documentclass{amsart}
\usepackage{enumerate}
\usepackage{geometry}
\usepackage{cleveref}
\usepackage{amsmath,amssymb,amsthm,mathrsfs,amsfonts}
\usepackage{url}
\usepackage{graphicx}
\usepackage{tikz}
\usetikzlibrary{shapes}
\usetikzlibrary{plotmarks}
\usetikzlibrary{arrows}
\usetikzlibrary{positioning}
\setlength{\parindent}{0pt}
\setlength{\parskip}{0.45em}

\usepackage{pifont}% http://ctan.org/pkg/pifont

\newtheorem{thm}{Theorem}[section]

\newtheorem{lem}[thm]{Lemma}
\newtheorem{prop}[thm]{Proposition}
\newtheorem{cl}{Claim}

\theoremstyle{definition}
\newtheorem{defn}[thm]{Definition}

\newtheorem*{prob}{Problem}

\numberwithin{equation}{section}

\newcommand\myeq{\mathrel{\stackrel{\makebox[0pt]{\mbox{\normalfont\tiny def}}}{=}}}

\def\a#1{\mathfrak{#1}}
\def\u{\mathcal{U}}
\def\cu{\mathrm{K}}
\def\f{\a{Fr}_{m}\mathrm{K}}
\def\fz{\a{Fr}_{m-1}\mathrm{K}}

\begin{document}
\title{Atoms in infinite dimensional free sequence-set algebras}
\author{Mohamed Khaled}
\address{Mohamed Khaled, Faculty of Engineering and Natural Sciences, Bah\c{c}e\c{s}ehir University,
Istanbul, Turkey.}
\email{mohamed.khalifa@eng.bau.edu.tr}
\author{Istv\'an N\'emeti}
\address{Istv\'an N\'emeti, Alfr\'ed R\'enyi Institute of Mathematics, Hungarian Academy of Sciences, Budapest, Hungary.}
\email{nemeti.istvan@renyi.mta.hu}
\begin{abstract}
A. Tarski proved that the m-generated free algebra of $\mathrm{CA}_{\alpha}$, the class of cylindric algebras of dimension $\alpha$, contains exactly $2^m$ zero-dimensional atoms, when $m\ge 1$ is a finite cardinal and $\alpha$ is an arbitrary ordinal. He conjectured that, when $\alpha$ is infinite, there are no more atoms. This conjecture has not been confirmed or denied yet. In this article, we show that Tarski's conjecture is true if $\mathrm{CA}_{\alpha}$ is replaced by $\mathrm{D}_{\alpha}$, $\mathrm{G}_{\alpha}$, but the $m$-generated free $\mathrm{Crs}_{\alpha}$ algebra is atomless.
\end{abstract}
\maketitle
\section{Introduction}
Free algebras play an important role in universal algebra, and especially in the theory of Boolean algebras with operators (BAO's), see, e.g., \cite{andjonnem}, \cite{nem86}, \cite[\S 5.6]{hmt2} and \cite{madnem}. One of the first things to investigate about these free algebras is whether they are atomic or not, i.e., whether their Boolean reducts are atomic or not. An atomic Boolean algebra is an algebra in which below every non-zero element there is an atom, i.e., a minimal non-zero element. Let $\mathrm{K}$ be a class of similar algebras. For each cardinal $m$, let $\mathfrak{Fr}_m\mathrm{K}$ stand for the $m$-generated free $\mathrm{K}$ algebra. 

Cylindric algebras are special BAO's that were introduced by A. Tarski around 1947. These are Boolean algebras equipped with unary operations, called cylindrifications, and constant symbols, called diagonals. These algebras capture the intrinsic algebraic side of first order logic (FOL), see \cite[section 4.3]{hmt1}. Let $\alpha$ be any ordinal.

\begin{defn}\label{ca}
A cylindric algebra of dimension $\alpha$ is an algebra of the form $$\a{A}=\langle A, +,\cdot, -,0,1,c_i,d_{ij}\rangle_{i,j\in\alpha},$$
where $A$ is a non-empty set, $+,\cdot$ are binary operations, $-,c_i$ are unary operations, $0,1,d_{ij}$ are constant symbols, and $\a{A}$ satisfies the following postulates for every $x,y\in A$ and every $i,j,k\in\alpha$:
\begin{enumerate}
\setcounter{enumi}{-1}
\renewcommand{\theenumi}{(CA \arabic{enumi})}
\renewcommand{\labelenumi}{\theenumi}
\item\label{ca0} $\langle A,+,\cdot,-,0,1\rangle$ is Boolean algebra,
\item\label{ca1} $c_i 0=0$,
\item\label{ca2} $x+c_ix=c_ix$,
\item\label{ca3} $c_i(x\cdot c_iy)=c_ix\cdot c_iy$,
\item\label{ca4} $c_ic_jx=c_jc_ix$,
\item\label{ca5} $d_{ii}=1$,
\item\label{ca6} If $k\not=i,j$, then $d_{ij}=c_k(d_{ik}\cdot d_{kj})$,
\item\label{ca7} If $i\not=j$, then $c_i(d_{ij}\cdot x)\cdot c_i(d_{ij}\cdot -x)=0$.
\end{enumerate}
\end{defn}
The class of all cylindric algebras of dimension $\alpha$ is denoted by $\mathrm{CA}_{\alpha}$. Atoms in the free cylindric algebras correspond to finitely axiomatizable complete and consistent theories of FOL. In the present paper, we are interested in the case when $\alpha$ is infinite, so $\alpha\ge\omega$ is assumed throughout the paper.

We will prove some results connected to a conjecture of A. Tarski \cite[Remark 2.5.12 and Problem 2.6]{hmt1}. This conjecture is concerned with atoms and zero-dimensional elements in the finitely generated free cylindric algebras of dimension $\alpha$. An element $a$ of a cylindric algebra $\mathfrak{A}$ is said to be  zero-dimensional if it is a fixed-point of all the cylindrifications, i.e., if $c_ia=a$ for each $i\in\alpha$.

In this section, we outline the background for this conjecture and we state our main theorems. Let $m$ be any cardinal. Let $\mathrm{K}\subseteq \mathrm{CA}_{\alpha}$ be a class of cylindric algebras containing at least one non-trivial algebra (i.e., having more than one-element). The following are true:

\begin{enumerate}
\renewcommand{\theenumi}{(Fact \arabic{enumi})}
\renewcommand{\labelenumi}{\theenumi}
\item If $m$ is infinite, then $\mathfrak{Fr}_m\mathrm{K}$ is atomless. This is due to D. Pigozzi \cite[Theorem 2.5.13]{hmt1}, and the proof can be easily generalized for any class of Boolean algebras with operators.
\item\label{fact2} Assume that $m$ is finite. In \cite[Theorem 2.5.11]{hmt1}, it is proved that $\mathfrak{Fr}_m\mathrm{K}$ contains exactly $2^m$ zero-dimensional atoms
\footnote{Tarski's proof is only about $\mathrm{CA}_{\alpha}$, but it works verbatim for the class $\mathrm{K}$.}.
\item\label{fact3} The $0$-generated free algebra $\mathfrak{Fr}_0\mathrm{K}$ contains exactly one atom, namely $–c_0-d_{01}$, by \cite[Theorem 2.5.11]{hmt1}. We show that it contains no other atom. Indeed, let $a\in\mathfrak{Fr}_0\mathrm{K}$ be such that $a\cdot c_0-d_{01}\not=0$. It is not hard to see that, for every $x\in\mathfrak{Fr}_0\mathrm{K}$, the set $\{i\in\alpha:c_ix\not=x\}$ is finite (which means $\mathfrak{Fr}_0\mathrm{K}$ is locally finite dimensional, see \cite[Definition~1.11.1 (i)]{hmt1}). This is true because $\mathfrak{Fr}_0\mathrm{K}$ is $0$-generated. So, we can find $i,j\in\alpha$ such that $i\not=j$, $c_ia=a$ and $c_ja=a$. By \cite[Theorem 1.3.19]{hmt1}, it follows that $a\cdot c_i-d_{ij}\not=0$. Now, by the cylindric equations \ref{ca0} - \ref{ca7}, we have 
$$c_i(a\cdot d_{ij})=c_i(c_ia\cdot d_{ij})=c_ia\cdot c_id_{ij}=a\cdot 1=a\not=0,$$
$$c_i(a\cdot -d_{ij})=c_i(c_ia\cdot -d_{ij})=c_ia\cdot c_i-d_{ij}=a\cdot c_i-d_{ij}\not=0.$$
Hence, $a\cdot d_{ij}\not=0$ and $a\cdot -d_{ij}\not=0$. Thus, $a$ is not an atom as desired.

\item\label{fact4} Suppose that $m\geq 1$. Tarski conjectured that all the atoms in the free algebra $\mathfrak{Fr}_m\mathrm{CA}_{\alpha}$ are zero-dimensional. See \cite[Remark 2.5.12 and Problem 2.6]{hmt1}. To the best of our knowledge, this conjecture remains open.
\end{enumerate} 

$\mathrm{Gs}_{\alpha}$ denotes the class of $\alpha$-dimensional representable cylindric algebras, it will be defined in the next section. First, we show that Tarski's conjecture is true when $\mathrm{CA}_{\alpha}$ is replaced by $\mathrm{Gs}_{\alpha}$.

\begin{thm}\label{nemeti} For each finite cardinal $m$, there are exactly $2^m$ many atoms in $\mathfrak{Fr}_m\mathrm{Gs}_{\alpha}$, each of these atoms is zero-dimensional. 
\end{thm}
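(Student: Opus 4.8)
The plan is to combine \ref{fact2} with a single additional statement: \emph{every atom of $\mathfrak{Fr}_m\mathrm{Gs}_{\alpha}$ is zero-dimensional}. Granting this, the atoms are exactly the zero-dimensional atoms, of which \ref{fact2} provides precisely $2^m$, and the theorem follows. To analyse atoms I would argue representationally, using that $\mathrm{Gs}_{\alpha}$ consists of genuine set algebras: I identify $\mathfrak{Fr}_m\mathrm{Gs}_{\alpha}$ with an algebra of first-order formulas (with equality) in the $m$ generators read as relation symbols over the $\alpha$ variables, where $c_i$ is $\exists v_i$, $d_{ij}$ is $v_i=v_j$, the order is logical consequence over \emph{all} structures, an atom is a complete consistent formula, and zero-dimensionality means being equivalent to a sentence. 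The task is then to show that a consistent formula with nonempty dimension set can be properly refined.

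Fix an atom $a$ and write $\Delta a=\{i:c_ia\neq a\}$; the goal is $\Delta a=\emptyset$. First I would record a dichotomy: for $j\neq k$ the element $e:=c_k(-d_{jk})$ is zero-dimensional and expresses ``the base has at least two points'', so, $a$ being an atom, either $a\le e$ or $a\le -e$. In the second case the base is a single point, every element is zero-dimensional, and we are done; so assume $\Delta a\neq\emptyset$ and hence $a\le e$. Next, the \emph{partially cylindrified} case $\emptyset\neq\Delta a\neq\alpha$ is handled purely algebraically: pick $j\in\Delta a$ and $k\notin\Delta a$, so that $c_ka=a$. Using \ref{ca3} together with $c_kd_{jk}=1$ and $c_k(-d_{jk})=e\ge a$, one computes $c_k(d_{jk}\cdot a)=a\neq 0$ and $c_k(-d_{jk}\cdot a)=a\cdot e=a\neq 0$, whence $d_{jk}\cdot a\neq 0$ and $-d_{jk}\cdot a\neq 0$; thus $d_{jk}$ splits $a$, contradicting atomicity. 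Consequently an atom must have $\Delta a\in\{\emptyset,\alpha\}$.

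This leaves the \textbf{full-dimensional case $\Delta a=\alpha$, which I expect to be the main obstacle}, since no coordinate fixes $a$ and the previous splitting breaks down; it is precisely here that representability (real models, with the generators read as \emph{arbitrary} relations) must be used, and it is the step that separates $\mathrm{Gs}_{\alpha}$ from general $\mathrm{CA}_{\alpha}$. I would realise $a$ by a term mentioning only finitely many diagonal and cylindrification indices, collect them in a finite set $D\subseteq\alpha$, and choose $j\neq k$ outside $D$ (possible as $\alpha$ is infinite). The claim is that $d_{jk}$ again splits $a$. For $-d_{jk}\cdot a\neq 0$ I would use a \emph{cloning} construction: in a structure witnessing $a$, adjoin an element indistinguishable from the value at coordinate $k$ with respect to all generator relations, and reassign $v_k$ to this clone; because no diagonal constrains $k$ (as $k\notin D$), the formula stays satisfied while $v_j\neq v_k$ is forced, so $-d_{jk}\cdot a\neq 0$. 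The delicate direction is $d_{jk}\cdot a\neq 0$, i.e.\ that the refinement $v_j=v_k$ is consistent with $a$: here one must \emph{merge} the roles of coordinates $j$ and $k$ without destroying satisfiability, and since the generators are $\alpha$-ary this changes generator-memberships and cannot be done by a naive clone. I would establish it by a dual model-surgery / substitution argument exploiting that, with $j,k\notin D$, the term cannot entail $v_j\neq v_k$ (equivalently $d_{jk}\cdot a=0$ would force $a=0$); this is the technical heart of the proof.

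Combining the three cases, every atom satisfies $\Delta a=\emptyset$, i.e.\ is zero-dimensional. Since \ref{fact2} shows there are exactly $2^m$ zero-dimensional atoms, $\mathfrak{Fr}_m\mathrm{Gs}_{\alpha}$ has exactly $2^m$ atoms, all zero-dimensional, as asserted.
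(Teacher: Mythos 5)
Your overall architecture is sound, and your first two cases do go through: the dichotomy on $e=c_k(-d_{jk})$ and the splitting of $a$ by $d_{jk}$ when some $c_ka=a$ holds are essentially the computation the paper itself uses in (Fact 3) and in its proof of this theorem. The genuine gap is in the case you yourself flag as the main obstacle, $\Delta a=\alpha$ --- and note this is the \emph{generic} case in $\mathfrak{Fr}_m\mathrm{Gs}_{\alpha}$, since already $\Delta x_0=\alpha$ for a free generator, so your case split postpones rather than reduces the work. For the delicate direction $d_{jk}\cdot a\neq 0$ you offer only the assertion that ``with $j,k\notin D$ the term cannot entail $v_j\neq v_k$'', which is a restatement of the claim, not an argument. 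The claim is true but genuinely nontrivial: it is precisely where representability enters, and Theorem~\ref{nemadd} of the paper shows that the analogous statement \emph{fails} in $\mathfrak{Fr}_m\mathrm{CA}_{\alpha}$ (there is a nonzero element below $-d_{ij}$ for all distinct $i,j$ outside $2$). A real model-surgery is required: one must pass to a modified unit and a ``substituted'' evaluation of the generators and verify, by induction on subterms whose indices lie in $D$, that satisfaction is preserved --- this is exactly the content of the displayed equivalences \eqref{B12} and \eqref{B13} in Section 4 of the paper. Without that induction your proof of the central case is missing.

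The paper sidesteps this case entirely by a device your proposal does not use: $\mathrm{Gs}_{\alpha}$ is generated as a variety by its \emph{locally finite dimensional} members \cite[Theorem 3.1.123]{hmt2}. Hence a term $a$ that is nonzero (and meets $c_0-d_{01}$) in $\mathfrak{Fr}_m\mathrm{Gs}_{\alpha}$ is nonzero in some locally finite $\mathfrak{A}\in\mathrm{Gs}_{\alpha}$ under a suitable evaluation, and there its value automatically has finite dimension set; your ``partially cylindrified'' computation then applies \emph{inside} $\mathfrak{A}$, and the resulting non-equations $a\cdot d_{ij}\neq0$ and $a\cdot-d_{ij}\neq0$ pull back to the free algebra. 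I recommend either adopting that reduction (which makes the $\Delta a=\alpha$ case evaporate) or writing out the substitution induction in full along the lines of Proposition~\ref{prop}; as it stands, the technical heart of the theorem is asserted rather than proved.
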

\begin{proof}
Recall from \ref{fact2} that the free algebra $\mathfrak{Fr}_m\mathrm{Gs}_{\alpha}$ contains exactly $2^m$ many zero-dimensional atoms. These atoms are listed in Lemma~\ref{lem2} herein (and also in \cite[Theorem 2.5.11]{hmt1}), and it is apparent that their sum is $-c_0-d_{01}$. It remains to prove that there is no atom below $c_0-d_{01}$ in the algebra $\mathfrak{Fr}_m\mathrm{Gs}_{\alpha}$. We use the fact that $\mathrm{Gs}_{\alpha}$ is generated as a variety by its locally finite dimensional algebras, see \cite[Theorem 3.1.123]{hmt2}. Assume that $a\in\mathfrak{Fr}_m\mathrm{Gs}_{\alpha}$ is such that $a\cdot c_0-d_{01}\not=0$. We show that $a$ is not an atom. We may assume that $a$ is a term, so it has a value in any algebra and evaluation pair. By  $a\ne 0$  in $\mathfrak{Fr}_m\mathrm{Gs}_{\alpha}$ we have that $a \ne 0$ in some locally finite dimensional algebra  $\mathfrak{A}\in\mathrm{Gs}_{\alpha}$ (with an appropriate evaluation of the variables occurring in  $a$). In the algebra $\mathfrak{A}$, we can find $i,j\in\alpha$ such that $i\not=j$, $c_ia=a$ and $c_ja=a$. Recall $a\cdot c_0-d_{01}\not=0$, so \cite[Theorem 1.3.18 (iii)]{hmt1} implies that $a\cdot c_i-d_{ij}\not=0$ is also true in $\mathfrak{A}$. Again, similarly to our argument in (Fact 3),
$$\mathfrak{A}\models c_i(a\cdot d_{ij})=c_i(c_ia\cdot d_{ij})=c_ia\cdot c_id_{ij}=a\cdot 1=a\not=0,$$
$$\mathfrak{A}\models c_i(a\cdot -d_{ij})=c_i(c_ia\cdot -d_{ij})=c_ia\cdot c_i-d_{ij}=a\cdot c_i-d_{ij}\not=0.$$
Hence, $a\cdot d_{ij}\not=0$ and $a\cdot -d_{ij}\not=0$ are true in $\mathfrak{A}$. Thus, the same is true in the free algebra  $\mathfrak{Fr}_m\mathrm{Gs}_{\alpha}$, so $a$ is not an atom, as desired.
\end{proof}

In this article, we investigate whether the above theorem remains true if $\mathrm{Gs}_{\alpha}$ is replaced by any of the relativized classes of cylindric algebras $\mathrm{Crs}_{\alpha}$, $\mathrm{D}_{\alpha}$, $\mathrm{G}_{\alpha}$. These classes will be defined in the next section. The notion of a relativized algebra was introduced in algebraic logic by L. Henkin. Relativization was proved potent in obtaining positive results in logic, see, e.g., \cite{marx2002}, \cite{mikulas98} and \cite{nem95}. For some properties of these classes, see, e.g., \cite{And01}, \cite{And88} and \cite{nem85}. For instance, in contrary to $\mathrm{Gs}_{\alpha}$, the equational theories of the classes $\mathrm{Crs}_{\alpha}$ and $\mathrm{G}_{\alpha}$ are decidable \cite{nem86,nem95}. The decidability of the equational theory of the class $\mathrm{D}_{\alpha}$ remains open.

\begin{thm}[Main Result 1]\label{main} Let $\alpha \geq\omega$ be an infinite ordinal and let $m\geq 1$ be a finite cardinal. The following are true:
\begin{enumerate}
\renewcommand{\theenumi}{(\arabic{enumi})}
\renewcommand{\labelenumi}{\theenumi}
\item The free algebra $\a{Fr}_{m-1}\mathrm{Crs}_{\alpha}$ is atomless.
\item The free algebra $\a{Fr}_m\mathrm{K}$ is not atomic, but it contains exactly $2^m$-many atoms each of which is zero-dimensional, when $\mathrm{K}$ is any of $\mathrm{D}_{\alpha}$, $\mathrm{G}_{\alpha}$ or $\mathrm{Gs}_{\alpha}$.
\end{enumerate}
\end{thm}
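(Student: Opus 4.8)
My plan is to reduce both parts to a single question: given a nonzero element $a$ of the free algebra (which I may take to be a term), can $a$ be written as a sum of two nonzero elements? An element is an atom exactly when no such splitting is available, so the whole theorem amounts to deciding, element by element, when a split exists. The decisive difference between the two parts will be whether the diagonal axioms survive relativization.

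For part (2) I would first dispose of the easy direction. Each of $\mathrm{D}_\alpha,\mathrm{G}_\alpha,\mathrm{Gs}_\alpha$ is a nontrivial class whose members validate the diagonal axioms (guaranteed by the diagonal-closedness of their units, from the definitions in the next section), so the $2^m$ Boolean combinations of diagonals exhibited in \ref{fact2} and Lemma~\ref{lem2} are zero-dimensional atoms summing to $-c_0-d_{01}$; for $\mathrm{Gs}_\alpha$ this is exactly Theorem~\ref{nemeti}. It then remains to show that there is no atom below $c_0-d_{01}$ and that $c_0-d_{01}\neq0$. The inequality is clear since $c_0-d_{01}=1$ in any cube with at least two base points, so once the non-existence of atoms below $c_0-d_{01}$ is established, $c_0-d_{01}$ is a nonzero element with no atom beneath it and the free algebra is not atomic.

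The heart of part (2) is the splitting below $c_0-d_{01}$, which I would obtain by transporting the argument of Theorem~\ref{nemeti} from $\mathrm{Gs}_\alpha$ to $\mathrm{D}_\alpha$ and $\mathrm{G}_\alpha$. Let $a$ be a term with $a\cdot c_0-d_{01}\neq0$. The proof of Theorem~\ref{nemeti} used that $\mathrm{Gs}_\alpha$ is generated by its locally finite dimensional members in order to find, inside a representing algebra, indices $i\neq j$ with $c_ia=a=c_ja$, and then to run the computation $c_i(a\cdot d_{ij})=a\neq0$ and $c_i(a\cdot-d_{ij})=a\cdot c_i-d_{ij}\neq0$. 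The plan is to prove the corresponding approximation statement for $\mathrm{D}_\alpha$ and $\mathrm{G}_\alpha$: every nonzero element of $\mathfrak{Fr}_m\mathrm{K}$ is already nonzero in some locally finite dimensional member of $\mathrm{K}$. Granting this, the index-symmetry of \cite[Theorem 1.3.18]{hmt1} upgrades $a\cdot c_0-d_{01}\neq0$ to $a\cdot c_i-d_{ij}\neq0$, and the computation goes through unchanged because the identities it needs (CA3 together with $c_id_{ij}=1$) hold in $\mathrm{D}_\alpha$ and $\mathrm{G}_\alpha$, again by diagonal-closedness of the units. This splits $a$ into $a\cdot d_{ij}$ and $a\cdot-d_{ij}$, both nonzero, so $a$ is not an atom. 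I expect the approximation lemma --- the analogue for $\mathrm{D}_\alpha$ and $\mathrm{G}_\alpha$ of \cite[Theorem 3.1.123]{hmt2} --- to be the main obstacle, together with the bookkeeping needed to confirm, directly from the unit conditions, that the CA-style identities used remain valid in these relativized classes.

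For part (1) the same splitting mechanism applies to every nonzero element, and this is precisely why $\mathrm{Crs}_\alpha$ behaves differently: its units need not be diagonal-closed, so $c_id_{ij}=1$ fails and the diagonals become independent of the rest of the structure. Given a nonzero term $a$ in $\mathfrak{Fr}_{m-1}\mathrm{Crs}_\alpha$, I would use that $a$ is finite while $\alpha$ is infinite to pick $k\neq l$ in $\alpha$ not occurring in $a$. Because $k,l$ are fresh, the diagonal $d_{kl}$ is unconstrained by $a$: starting from any $\mathrm{Crs}_\alpha$ representation witnessing $a\neq0$, I can adjust the $k$-th and $l$-th coordinates of a witness freely (the unit of a $\mathrm{Crs}_\alpha$ algebra is an arbitrary set of sequences), producing representations in which $a\cdot d_{kl}\neq0$ and in which $a\cdot-d_{kl}\neq0$. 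Hence $a$ splits, and since this works for every nonzero element --- including the Boolean combinations of diagonals that are atoms in the other three classes --- the algebra $\mathfrak{Fr}_{m-1}\mathrm{Crs}_\alpha$ is atomless. The one delicate point is the justification that a witness for $a$ can always be re-arranged to realize either value of $d_{kl}$; this rests on the freshness of $k,l$ and the complete freedom in the choice of a $\mathrm{Crs}_\alpha$ unit, and it is the exact place where the absence of the diagonal axioms is exploited.
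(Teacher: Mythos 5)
Your plan for part (1) is essentially the route the paper takes: pick fresh indices $i\neq j$ outside $index(\tau)$ and rebuild the unit so that the witness realizes either value of $d_{ij}$; the ``delicate point'' you flag is handled in the paper's proof of Theorem~\ref{realmain}(3) by the explicit construction $V^{*}=\{h^{*}:h\in V,\ h\equiv_{\Gamma}f\}$, $\iota^{*}$, together with the induction \eqref{B13}, so that part of the proposal is sound in outline.

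Part (2), however, rests on a step that fails. Your entire splitting argument below $c_0-d_{01}$ is conditional on the ``approximation lemma'' that every nonzero element of $\a{Fr}_m\mathrm{D}_{\alpha}$ or $\a{Fr}_m\mathrm{G}_{\alpha}$ is nonzero in some \emph{locally finite dimensional} member of the class --- i.e., that these varieties are generated by their locally finite dimensional algebras, as $\mathrm{Gs}_{\alpha}$ is by \cite[Theorem 3.1.123]{hmt2}. You offer no proof of this, and the paper states explicitly (just after Theorem~\ref{main}) that this generation property is \emph{not} true for the relativized classes; that failure is the whole reason a different argument is needed. Indeed, if your lemma held, the Fact-3-style argument would also settle the atomicity of $\a{Fr}_0\mathrm{D}_{\alpha}$ and $\a{Fr}_0\mathrm{G}_{\alpha}$, which the paper leaves as an open problem. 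There is a secondary gap of the same flavour: $\mathrm{D}_{\alpha}$ and $\mathrm{G}_{\alpha}$ are not subclasses of $\mathrm{CA}_{\alpha}$ (axioms such as \ref{ca4}, \ref{ca6}, \ref{ca7} can fail in relativized units), so you cannot invoke \cite[Theorem 1.3.18]{hmt1} or Tarski's \cite[Theorem 2.5.11]{hmt1} inside them without reproving those facts from the unit semantics. The paper's actual mechanism (Proposition~\ref{prop}) avoids local finiteness entirely: given $(V,f,\iota)\models\tau\cdot c_0-d_{01}$, it enlarges $V$ by one diagonalized sequence and produces two evaluations $\iota_1,\iota_2$ that both keep $\tau\cdot c_0-d_{01}$ nonzero but disagree on a term of the form $c_0(-d_{01}\cdot c_i(x_0\cdot\pm d_{ij}))$; this splits the element using the free generator $x_0$ (hence the hypothesis $m\geq 1$), an idea absent from your proposal. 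You would need either to prove your approximation lemma (against the paper's explicit claim that it is false) or to replace the splitting mechanism along these lines.
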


The proof of Theorem~\ref{nemeti} depends essentially on the fact that $\mathrm{Gs}_{\alpha}$ is generated by locally finite dimensional algebras. The same is not true for the relativized classes of cylindric algebras, therefore the same argument cannot be used to prove Theorem~\ref{main}. We will prove a stronger theorem, Theorem~\ref{realmain} in the next section, which will imply Theorem~\ref{main}. 

The proofs of Theorem~\ref{main} and Theorem~\ref{realmain} go by showing that there are no elements in the free algebras that are disjoint from all the diagonals $d_{ij}$. Theorem~\ref{nemadd} below shows that the same is not true for $\mathrm{CA}_{\alpha}$, therefore for settling the conjecture for $\mathrm{CA}_{\alpha}$, one has to use other techniques, too.

\begin{thm}[Main Result 2]\label{nemadd}
Let $\alpha\ge 2$ be any ordinal and let $m\geq 1$ be a finite cardinal. Then, there is $x\in\a{Fr}_m\mathrm{CA}_{\alpha}$ such that $x\not=0$ and $x\leq -d_{ij}$ for every $ i,j\in \alpha\sim 2$ with $i\not=j$.
\end{thm}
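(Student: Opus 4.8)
The plan is to produce a single term $x$ whose value in every cylindric algebra is disjoint from $d_{ij}$ for all $i,j\in\alpha\sim 2$ with $i\neq j$, and which is nonzero in at least one algebra of $\mathrm{CA}_\alpha$. One must keep in mind that, for a free algebra, an inequality $x\le -d_{ij}$ means exactly that $x\cdot d_{ij}=0$ is a law of the whole variety $\mathrm{CA}_\alpha$; thus each of the (infinitely many, when $\alpha$ is infinite) conditions has to be \emph{derivable} from the postulates \ref{ca0}--\ref{ca7}, whereas nonvanishing of $x$ only has to be witnessed in one algebra. Two tools organise the argument. First, every permutation $\rho$ of $\alpha$ induces an automorphism $\widehat\rho$ of $\mathfrak{Fr}_m\mathrm{CA}_\alpha$ that fixes each generator and sends $c_i\mapsto c_{\rho i}$ and $d_{ij}\mapsto d_{\rho i\,\rho j}$; this is legitimate because the postulates are invariant under relabelling the indices. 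Second, for $i\neq j$ one has the term-definable transposition $\mathsf{s}[i,j]$ (interchange the $i$th and $j$th coordinates), together with the cylindric identity $d_{ij}\cdot \mathsf{s}[i,j]y=d_{ij}\cdot y$, valid in every cylindric algebra.

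The second tool yields a convenient reduction: if a term $z$ satisfies $z\cdot\mathsf{s}[i,j]z=0$, then, using idempotence and $d_{ij}\cdot z=d_{ij}\cdot\mathsf{s}[i,j]z$,
\[
d_{ij}\cdot z \;=\; d_{ij}\cdot z\cdot z \;=\; d_{ij}\cdot\mathsf{s}[i,j]z\cdot z \;=\; d_{ij}\cdot\bigl(z\cdot \mathsf{s}[i,j]z\bigr)\;=\;0,
\]
so $z\le -d_{ij}$. In other words, it suffices to make $z$ disjoint from its own $(i,j)$-transpose, and this the substitution calculus can force syntactically: for a single fixed pair the element $g-\mathsf{s}[i,j]g$ already works, since $\mathsf{s}[i,j]$ is an involution, whence $\mathsf{s}[i,j](g-\mathsf{s}[i,j]g)=\mathsf{s}[i,j]g-g$ and the product $(g-\mathsf{s}[i,j]g)\cdot(\mathsf{s}[i,j]g-g)\le g\cdot -g=0$. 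The real task is to pass from one pair to all pairs with a single finite term. The infinite product $\prod_{i,j}(g-\mathsf{s}[i,j]g)$ is not a term, so instead I would exploit the automorphisms: by transitivity of $\mathrm{Sym}(\alpha\sim 2)$ on ordered pairs, once $x$ is disjoint from $d_{23}$ it is disjoint from every $d_{ij}$, \emph{provided} $x$ is fixed by all $\widehat\rho$ with $\rho$ the identity on $2=\{0,1\}$. The crux is therefore to build one element that is simultaneously $\mathrm{Sym}(\alpha\sim 2)$-symmetric and disjoint from $d_{23}$; this is conceivable only because the free generators have full dimension set $\alpha$, so that a term in the two reserved coordinates $0,1$ can still constrain all the remaining coordinates through the generators.

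Finally one must verify $x\neq 0$, i.e.\ exhibit some $\mathfrak{A}\in\mathrm{CA}_\alpha$ in which $x$ has a nonzero value. I expect this to be the \emph{main obstacle}, and one in which representable algebras are of no help: if $\mathfrak{A}$ were a generalized cylindric set algebra, then evaluating the generators at the top element and using that a finite term mentions only finitely many indices, the value of $x$ would be a set invariant under interchanging any two unused coordinates, hence unable to avoid the diagonals $d_{ij}$ for $i,j$ outside that finite set unless it is $0$ — this is precisely the mechanism underlying Theorem~\ref{nemeti}, and it shows that any witness must be a \emph{nonrepresentable} cylindric algebra, in harmony with the remark preceding the statement. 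Concretely, I would either construct such a nonrepresentable $\mathfrak{A}$ directly and read off the required value, or argue purely syntactically that the equation $x=0$ is not forced by \ref{ca0}--\ref{ca7} together with the relations $x\cdot d_{ij}=0$ (for $i,j\in\alpha\sim 2$, $i\neq j$). Establishing this consistency, hand in hand with producing the single symmetric term described above, is the heart of the proof.
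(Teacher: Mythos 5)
There is a genuine gap: by your own admission the proposal stops exactly where the proof has to begin. You correctly reduce the problem to finding a single term, involving only the reserved indices $0,1$ (hence automatically fixed by the automorphisms of $\a{Fr}_m\mathrm{CA}_{\alpha}$ induced by permutations of $\alpha\sim 2$), that is disjoint from $d_{23}$ in every $\mathrm{CA}_{\alpha}$ yet nonzero in some $\mathrm{CA}_{\alpha}$; and you correctly observe that any nonzero witness must be nonrepresentable. But you produce neither the term nor the witness, and these two constructions \emph{are} the theorem. The paper supplies both explicitly: the term is $\tau(x)=x\cdot\chi(x)$ with $y=c_0x\cdot c_1x-x$ and $\chi(x)=-c_{(2)}(c_0x\oplus c_0y)\cdot -c_{(2)}(c_1x\oplus c_1y)\cdot -c_{(2)}[c_1(d_{01}\cdot c_0x)\cdot c_0x-d_{01}]\cdot -c_{(2)}[c_0(d_{01}\cdot c_1x)\cdot c_1x-d_{01}]$, engineered so that $\tau$ and $\eta=y\cdot\chi(x)$ satisfy a finite system $E$ of conditions ($\tau\cdot\eta=0$, $\tau\neq 0$, $c_i\tau=c_i\eta$ for $i<2$, plus two diagonal conditions) that is \emph{unsatisfiable in} $\mathrm{Gs}_2$; disjointness from $d_{ij}$ ($i,j\geq 2$) then follows because $\tau\cdot d_{ij}\neq 0$ would transport $E$ via $s^i_j$ into the neat reduct $\mathfrak{Nr}_2\mathfrak{Rd}_{2\cup\{i\}}\a{Fr}_m\mathrm{CA}_{\alpha}$, which lies in $\mathrm{Gs}_2$ by the Henkin--Monk--Tarski neat-embedding theorem for dimension $2$. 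The nonzero witness is the concrete nonrepresentable algebra on $\mathcal{P}({}^{\alpha}\alpha\cup\{p'\})$ in which the extra point $p'$ is cylindrification-equivalent to the identity sequence $p$ but lies outside every diagonal. Nothing in your outline generates either object, and the ``purely syntactic consistency'' alternative you mention is not easier than exhibiting the algebra.

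Two smaller cautions on the scaffolding itself. First, the term-definable transposition $\mathsf{s}[i,j]$ in cylindric algebras is built as a composite $s^k_i s^i_j s^j_k$ using a spare index $k$, and it is an involution satisfying $d_{ij}\cdot\mathsf{s}[i,j]y=d_{ij}\cdot y$ only on elements $y$ with $c_ky=y$; since the free generators of $\a{Fr}_m\mathrm{CA}_{\alpha}$ are not locally finite dimensional, your computation with $g-\mathsf{s}[i,j]g$ is not justified as stated (the paper sidesteps this by working with the replacement $s^i_j$ and the neat reduct instead). Second, your symmetry reduction, while sound, buys nothing beyond what is already automatic for a term mentioning only the indices $0$ and $1$, so the ``crux'' you isolate (building a $\mathrm{Sym}(\alpha\sim 2)$-symmetric element) is not where the difficulty lies; the difficulty is making such a term provably disjoint from a single $d_{ij}$ with $i,j\geq 2$ without killing it in all of $\mathrm{CA}_{\alpha}$.
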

It is worth of note that atomicity of these free algebras correspond to the failure of a version of G\"odel's incompleteness theorem for the corresponding logics. For more detail about this correspondence, see \cite{nem86}, \cite{zalan} and \cite{myphd}. For results concerning the atomicity of free algebras of logics, one can see \cite{hmt1,hmt2,nem86,andjonnem,monk,madnem,zalan,andnem2013,myphd,myigpl,apal,aim}.
\section{Algebras of sets of sequences}
Throughout, fix an infinite ordinal $\alpha$. A function with domain $\alpha$ is called a \emph{\textbf{sequence} of length $\alpha$} (a \emph{sequence} for short). For every $i\in \alpha$ and every two sequences $f,g$, we write $f\equiv_ig$ if and only if $g= f(i/u)$ for some $u$, where $f(i/u)$ is the sequence which agrees with $f$ everywhere except that it's value at $i$ equals $u$. Let $V$ be a set of sequences of length $\alpha$. Such set is called an \emph{$\alpha$-dimensional \textbf{unit}}. The smallest set $U$ that satisfies $V\subseteq{^{\alpha}U}$ is called \emph{the \textbf{base} of the unit $V$}.

Let $i,j\in\alpha$. Define the \emph{$ij$-\textbf{diagonal} of the unit $V$} as follows: $D_{ij}^{[V]}=\{f\in V:f(i)=f(j)\}$. For each $X\subseteq V$, define $C_i^{[V]}X=\{f\in V:(\exists g\in X) \ f\equiv_ig\}$. This is called the \emph{$V$-\textbf{cylindrification} of $X$ in the direction $i$}. When no confusion is likely, we omit the superscript $[V]$ from the above defined objects.
\begin{defn}
The \emph{\textbf{full cylindric-like algebra} over the unit $V$} is an algebra of the form $$\a{P}(V)\myeq\langle \mathcal{P}(V), \cup, \cap, \sim, \emptyset, V, C_i, D_{ij}\rangle_{i,j\in\alpha},$$
where $\mathcal{P}(V)$ is the family of all subsets of $V$, $\cup,\cap,\sim$ are the Boolean set-theoretic operations, $\emptyset$ is the empty set, and the $C_i$'s and the $D_{ij}$'s are as defined above.
\end{defn}
Let $\mathrm{K}$ be a class of algebras of same signature. Then, $\mathbf{I}\mathrm{K}$, $\mathbf{S}\mathrm{K}$ and $\mathbf{H}\mathrm{K}$ are the classes that consist of the isomorphic copies, subalgebras and homomorphic images, respectively, of the members of $\mathrm{K}$.

\begin{defn} We define the following classes of cylindric-like set algebras.
\begin{itemize}
\item The class of all \emph{\textbf{relativized} cylindric set algebras} is given by $$\mathrm{Crs}_{\alpha}=\mathbf{IS}\{\a{P}(V):V\text{ is an $\alpha$-dimensional unit}\}.$$
\item The class of \emph{\textbf{diagonalizable} cylindric set algebras} $\mathrm{D}_{\alpha}$ is given by $$\mathrm{D}_{\alpha}=\mathbf{IS}\{\a{P}(V):V\text{ is a diagonalizable $\alpha$-dimensional unit}\},$$
where an $\alpha$-dimensional unit $V$ is diagonalizable if it satisfies $f(i/f(j))\in V$, for each $f\in V$ and each $i,j\in\alpha$.
\item The class of \emph{\textbf{locally square} cylindric set algebras} $\mathrm{G}_{\alpha}$ is given by $$\mathrm{G}_{\alpha}=\mathbf{IS}\{\a{P}(V):V\text{ is a union of $\alpha$-dimensional squares}\},$$
where a union of $\alpha$-dimensional squares is an $\alpha$-dimensional unit of the form $V=\bigcup_{i\in I}{^{\alpha}U_i}$ for some family of non-empty sets $\{U_i:i\in I\}$.
\item The class of \emph{\textbf{generalized} cylindric set algebras} $\mathrm{Gs}_{\alpha}$ is given by $$\mathrm{Gs}_{\alpha}=\mathbf{IS}\{\a{P}(V):V\text{ is a union of mutually disjoint $\alpha$-dimensional squares}\},$$
where $V$ is a union of mutually disjoint $\alpha$-dimensional squares if there is a family of mutually disjoint non-empty sets $\{U_i:i\in I\}$ such that $V=\bigcup_{i\in I}{^{\alpha}U_i}$.
\end{itemize}
\end{defn}

We note that $\mathrm{Crs}_{\alpha}$, $\mathrm{D}_{\alpha}$, $\mathrm{Gs}_{\alpha}$ and $\mathbf{H}\mathrm{G}_{\alpha}$ are varieties, and it is still open whether $\mathrm{G}_{\alpha}=\mathbf{H}\mathrm{G}_{\alpha}$. Since we are dealing with many classes, it is more convenient to prove a general theorem which implies Theorem~\ref{main}. We need to generalize our definitions, too.
\begin{defn}
Let $\u$ be a class of $\alpha$-dimensional units. 
\begin{itemize}
\item We say that $\u$ \emph{\textbf{supports diagonalization}} iff $V\cup\{f(i/f(j))\}\in\u$ for each $V\in\u$, each $f\in V$ and each $i,j\in\alpha$.
\item We say that $\u$ \emph{\textbf{requires diagonalization}} iff $\u$ contains a singleton, and $f(i/f(j))\in V$ for each $V\in\u$, each $f\in V$ and each $i,j\in \alpha$.
\end{itemize}
\end{defn}
For any class $\u$ of $\alpha$-dimensional units, if $\u$ requires diagonalization then $\u$ must also support diagonalization, but the converse is not necessarily true. Let $\mathrm{K}$ be a class of similar algebras, then $\mathbf{V}(\mathrm{K})$ stands for the smallest variety containing $\mathrm{K}$. One can easily see that each of the classes $\mathrm{Gs}_{\alpha}$, $\mathrm{D}_{\alpha}$ and $\mathbf{H}\mathrm{G}_{\alpha}$ can be viewed as $\mathbf{V}(\{\mathfrak{P}(V):V\in\u\})$ for  some class of $\alpha$-dimensional units $\u$ that require diagonalization. The same is not true for the class $\mathrm{Crs}_{\alpha}$. However, $\mathrm{Crs}_{\alpha}$ can be introduced as the variety generated by the class of full algebras of all $\alpha$-dimensional units.

\begin{thm}\label{realmain}
Let $\mathrm{K}$ be a class of cylindric-type algebras such that $\mathbf{V}(\mathrm{K})=\mathbf{V}(\{\a{P}(V):V\in\u\})$ for some class $\u$ of $\alpha$-dimensional units. Let $m\geq 1$ be a finite cardinal. The following are true:
\begin{enumerate}
\renewcommand{\theenumi}{(\arabic{enumi})}
\renewcommand{\labelenumi}{\theenumi}
\item If $\u$ supports diagonalization, then the free algebra $\f$ is not atomic.
\item If $\u$ requires diagonalization, then the following are true:
\begin{enumerate}
\renewcommand{\theenumi}{(\alph{enumi})}
\renewcommand{\labelenumi}{\theenumi}
\item $\f$ contains exactly $2^m$-many atoms.
\item All the atoms of $\f$ are zero-dimensional.
\item\label{itemc} There is a decomposition $\f\cong\a{A}\times\a{B}$ such that $\vert A\vert=2^{2^{m}}$, $\a{A}$ is discrete and $\a{B}$ is atomless. 
\end{enumerate}
\item[(3)] If $\u$ is the class of all $\alpha$-dimensional units, then $\fz$ is atomless.
\end{enumerate}
\end{thm}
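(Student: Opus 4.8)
The plan is to reduce atomlessness to a single splitting statement: for every term $t$ representing a nonzero element of $\fz$ and every pair of distinct indices $i_0,j_0\in\alpha$ not occurring in $t$, both $t\cdot d_{i_0j_0}$ and $t\cdot -d_{i_0j_0}$ are nonzero in $\fz$. Granting this, since a term mentions only finitely many indices and $\alpha$ is infinite, such a pair $i_0,j_0$ exists; then $b=t\cdot d_{i_0j_0}$ satisfies $0\neq b\leq t$ and $t-b=t\cdot -d_{i_0j_0}\neq 0$, so $b$ lies strictly below $t$. As $t$ is arbitrary, no nonzero element is minimal and $\fz$ is atomless.

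To prove the splitting statement I would first use that, as $\u$ is the class of \emph{all} $\alpha$-dimensional units, a term is nonzero in $\fz$ exactly when it takes a nonempty value in some full algebra $\mathfrak{P}(V)$ under some assignment $h$ of the generators. Fix such $V,h$ and a witness $f\in t^{h}$, let $J\subseteq\alpha$ be the (finite) set of all indices occurring syntactically in $t$, and pick distinct $i_0,j_0\in\alpha\setminus J$. The key is a locality lemma: whether $f\in t^{h}$ depends only on the $J$-component $V_f=\{g\in V:g\restriction(\alpha\setminus J)=f\restriction(\alpha\setminus J)\}$. Indeed, for $i\in J$ every $g\equiv_i p$ with $p\in V_f$ again lies in $V_f$, so each operation occurring in $t$ maps $V_f$ into itself, and induction on $t$ gives $t^{h}\cap V_f=t^{h\restriction V_f}$ computed inside $\mathfrak{P}(V_f)$.

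Next I would exploit that the coordinates outside $J$ are inert. Replacing the common off-$J$ pattern shared by all sequences of $V_f$ by any other off-$J$ pattern yields a unit with exactly the same family of $J$-patterns, and the bijection fixing the $J$-coordinates is an isomorphism onto $\mathfrak{P}$ of the new unit that respects every operation occurring in $t$ (the diagonals $D_{ij}$ and cylindrifications $C_i$ with $i,j\in J$ read only $J$-coordinates). Hence the image of $f$ is again a witness for $t$. Choosing the off-$J$ pattern so that its values at $i_0$ and $j_0$ coincide produces a unit $V^{(1)}$ and a witness $f^{(1)}$ with $f^{(1)}(i_0)=f^{(1)}(j_0)$, so $f^{(1)}$ witnesses $t\cdot d_{i_0j_0}\neq 0$; choosing an injective off-$J$ pattern produces $V^{(2)}$ and a witness $f^{(2)}$ with $f^{(2)}(i_0)\neq f^{(2)}(j_0)$, witnessing $t\cdot -d_{i_0j_0}\neq 0$. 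This establishes the splitting statement.

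I expect the main obstacle to be the locality lemma and the verification that the re-coordinatization is an isomorphism respecting $t$: because cylindrification is a global operation on the unit, the argument genuinely relies on every index of $t$ lying in the finite set $J$ while $i_0,j_0$ lie outside it. It is precisely here that the hypothesis that $\u$ consists of \emph{all} units is essential, since the units $V^{(1)},V^{(2)}$ are ``partial squares'' that fix the coordinates outside $J$; these are legitimate in $\mathrm{Crs}_\alpha$ but need not belong to the unit classes of $\mathrm{Gs}_\alpha$, $\mathrm{D}_\alpha$ or $\mathrm{G}_\alpha$, which is why the analogous splitting fails there and those free algebras keep their $2^m$ atoms.
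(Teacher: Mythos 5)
Your argument is only a proof of item (3). For that item it is correct and essentially the paper's own proof: the paper likewise fixes a witness $(V,f,\iota)$ for a nonzero term $\tau$, takes $\Gamma=index(\tau)$ and distinct $i,j\in\alpha\sim\Gamma$, restricts attention to the sequences $\equiv_\Gamma$-equivalent to $f$, and rewrites the coordinates outside $\Gamma$ (it changes only the two coordinates $i,j$ to fresh elements $a,b$ with $a=b\iff f(i)\neq f(j)$, rather than the whole off-$\Gamma$ block, but the locality induction and the conclusion that both $\tau\cdot d_{ij}$ and $\tau\cdot-d_{ij}$ are nonzero are the same). Your remark that this needs arbitrary units and hence fails for $\mathrm{Gs}_\alpha$, $\mathrm{D}_\alpha$, $\mathrm{G}_\alpha$ is also on target.

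The genuine gap is that items (1) and (2) --- the bulk of the theorem --- are not addressed, and the splitting-by-diagonals idea cannot prove them, as you yourself observe. What is missing: (i) the construction of the $2^m$ candidate atoms $X^{q}\cdot -c_0-d_{01}$ and the proof, using a singleton unit $\{w\}\in\u$ guaranteed by ``requires diagonalization,'' that each is nonzero and is an atom (the paper does this by induction on terms $\sigma$, showing $\mathrm{K}\models\tau\cdot\sigma=0$ or $\mathrm{K}\models\tau\cdot-\sigma=0$, using that $\tau$ is zero-dimensional to push $c_k$ through); (ii) the zero-dimensionality of these elements, which rests on the identity $X^{q}\cdot-c_0-d_{01}=X^{q}\cdot-c_i-d_{ij}$ valid when $\u$ requires diagonalization; (iii) the proof that there is \emph{no} atom below $c_0-d_{01}$ when $\u$ merely supports diagonalization --- this is the technical heart of items (1) and (2a), and it uses a completely different splitting: given a witness $(V,f,\iota)$ for $\tau\cdot c_0-d_{01}\neq 0$, one adjoins a single diagonal point $g(i/g(0))$ to $V$, restricts or extends the value of the free generator $x_0$ on $W=\{h\in V:h(i)=h(j)\}$, and separates the two resulting evaluations by the term $c_0(-d_{01}\cdot c_i(x_0\cdot -d_{ij}))$; this is where the hypothesis $m\geq 1$ enters and no diagonal constant can do the job; and (iv) the decomposition $\f\cong\a{A}\times\a{B}$ obtained by relativizing to $-c_0-d_{01}$ and $c_0-d_{01}$. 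Without these pieces your proposal establishes only one of the three claims.
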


Theorem 2.3 implies Theorem 1.2 since the classes of $\mathrm{D}_{\alpha}$-units, $\mathrm{G}_{\alpha}$-units and $\mathrm{Gs}_{\alpha}$-units require diagonalization, while the class of $\mathrm{Crs}_{\alpha}$-units supports diagonalization. We divide the proof of Theorem~\ref{realmain} into some lemmas and propositions. Throughout the remaining part of this paper, fix classes $\mathrm{K}$ and $\u$, and a cardinal $m\geq 1$ satisfying the assumptions of the Theorem~\ref{realmain}. 
\section{The atomic part in $\f$}
Let $ct_{\alpha}$ be \emph{the \textbf{algebraic type} of cylindric-like algebras}, it consists of binary operations $+$, $\cdot$, unary operations $-,c_i$ ($i\in\alpha$) and constant symbols $0,1,d_{ij}$ ($i,j\in\alpha$). Let $Y$ be any set, \emph{the \textbf{set of all terms $T_{\alpha}(Y)$} generated by $Y$ in the signature $ct_{\alpha}$} is defined to be the smallest set satisfying:
\begin{itemize}
\item $Y\subseteq T_{\alpha}(Y)$ and $0,1,d_{ij}\in T_{\alpha}(Y)$ for each $i,j\in\alpha$,
\item For each $\tau,\sigma\in T_{\alpha}(Y)$, we have $\tau+\sigma,\tau\cdot\sigma,-\tau\in T_{\alpha}(Y)$,
\item For each $\tau\in T_{\alpha}(Y)$ and each $i\in\alpha$, we have $c_i\tau\in T_{\alpha}(Y)$.
\end{itemize}

Note that the equational theory of $\cu$ coincides with the equational theory of $\{\a{P}(V):V\in\u\}$. So, for example, whenever we say that $\cu\not\models\tau=0$, for some term $\tau\in T_{\alpha}(Y)$, we will assume that there is a unit $V\in\u$, $f\in V$ and an evaluation $\iota:Y\rightarrow\mathcal{P}(V)$ such that $(V,f,\iota)\models\tau$. The latter means that $f$ is a member of the interpretation of $\tau$ in the algebra $\a{P}(V)$ under the evaluation $\iota$. Examples of equations that are true inƒ the class $\mathrm{K}$ (cf., \cite[Theorem 9.4]{monk} and \cite[Theorem 1.2.6 (ii) and Theorem 1.2.11]{hmt1}): For $i,j\in\alpha$ with $i\not=j$,
\begin{enumerate}
\renewcommand{\theenumi}{(Eq \arabic{enumi})}
\renewcommand{\labelenumi}{\theenumi}
\item\label{eqc1} $c_i 0=0$.
\item\label{eqc2} $x\cdot c_ix=x$.
\item\label{eqc3} $c_i(x\cdot c_iy)=c_ix\cdot c_iy$.
\item\label{eqc4} $c_i(x+y)=c_ix+c_iy$.
\item\label{eqc5} $c_i(-c_ix)=-c_ix$.
\item\label{eqc6} $d_{ii}=1$.
\item\label{eqc7} $c_i(x\cdot d_{ij})\cdot d_{ij}=x\cdot d_{ij}$.
\end{enumerate}

Let $X=\{x_0,\ldots,x_{m-1}\}$ be the generating set of the free algebra $\f$. Let $q\in{^X\{-1,1\}}$, such function is called \emph{a \textbf{choice function} for $X$}. For each $x_k\in X$, let $x_k^q=x_k$ if $q(x_k)=1$ otherwise let $x_k^q=-x_k$. Define $X^q=x_0^q \cdots x_{m-1}^q$.
\begin{lem}\label{lem}
Suppose that $\u$ requires diagonalization. Let $i,j\in\alpha$ be such that $i\not=j$, and let $q\in{^X\{-1,1\}}$ be a choice function. Then, $$\cu\models X^{q}\cdot -c_0-d_{01}=X^{q}\cdot -c_i-d_{ij}.$$ 
Consequently, $X^{q}\cdot -c_0-d_{01}$ is a zero-dimensional element in the algebra $\f$.
\end{lem}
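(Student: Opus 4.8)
The plan is to verify the displayed equation directly in the generating set algebras. Since the equational theory of $\cu$ coincides with that of $\{\a{P}(V):V\in\u\}$, it suffices to fix a unit $V\in\u$ (which, as $\u$ requires diagonalization, is closed under $f\mapsto f(i/f(j))$), fix an evaluation of $x_0,\dots,x_{m-1}$ into $\mathcal{P}(V)$, and prove that the two sides agree as subsets of $V$. The heart of the matter is a coordinate-free description of the set $\sim C_i(\sim D_{ij})$, which is the value of $-c_i-d_{ij}$: I claim this set is independent of the chosen pair $i\ne j$. Granting this, the value of $-c_i-d_{ij}$ equals that of $-c_0-d_{01}$, and intersecting with the cell $X^q$ yields the equation at once (indeed already before intersecting with $X^q$).

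First I would show that every $f\in\sim C_i(\sim D_{ij})$ is a \emph{constant} sequence. By definition no $i$-variant $g\equiv_i f$ lying in $V$ satisfies $g(i)\ne g(j)$; applying the closure $f(i/f(k))\in V$ for each $k\in\alpha$ produces the $i$-variant $g=f(i/f(k))$ with $g(i)=f(k)$ and $g(j)=f(j)$, so necessarily $f(k)=f(j)$ for every $k$, i.e.\ $f$ is constant with value $f(j)$. Next comes the \emph{propagation} step, which is the key computation and the main obstacle: if a constant sequence $\bar{c}$ has a nontrivial variant $\bar{c}(i/u)\in V$ with $u\ne c$ at some coordinate $i$, then it has one at every coordinate $i'$. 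This is obtained by two applications of the closure property — starting from $h=\bar{c}(i/u)$, first set coordinate $i'$ to $h(i)=u$ to get $h'=h(i'/u)\in V$, then reset coordinate $i$ of $h'$ to $c$ (using some $k\ne i,i'$, available since $\alpha\ge\omega$), landing $\bar{c}(i'/u)\in V$. Hence the set of coordinates at which $\bar{c}$ admits a nontrivial variant is either empty or all of $\alpha$, and therefore $\sim C_i(\sim D_{ij})$ equals the set $E$ of constant sequences of $V$ having no nontrivial variant in $V$, a description free of $i$ and $j$. This establishes the displayed equation.

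For the \emph{consequently} clause I would show $c_k a=a$ in $\f$ for every $k\in\alpha$, where $a=X^q\cdot -c_0-d_{01}$. Fixing $k$ and choosing $j\ne k$, the equation just proved rewrites $a$ as $X^q\cdot -c_k-d_{kj}$. The factor $-c_k-d_{kj}$ is a fixed point of $c_k$ by \ref{eqc5}, so \ref{eqc3} gives $c_k a=c_k X^q\cdot(-c_k-d_{kj})$, and it remains to see that $c_k X^q$ and $X^q$ agree on $\sim C_k(\sim D_{kj})=E$. But a point of $E$ is constant with no nontrivial $k$-variant in $V$, so its only $k$-variant in $V$ is itself; hence membership in $C_k(X^q)$ forces membership in $X^q$, giving $c_k X^q\cdot(-c_k-d_{kj})=X^q\cdot(-c_k-d_{kj})=a$. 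As $k$ was arbitrary, $a$ is zero-dimensional.

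The only genuinely nontrivial point is the propagation step, which is exactly where the diagonalization hypothesis is spent; I note that the singleton demanded by ``requires diagonalization'' plays no role in this lemma (it is needed later, to guarantee these elements are nonzero). Everything else is Boolean bookkeeping together with the cylindric identities \ref{eqc1}--\ref{eqc7} already recorded.
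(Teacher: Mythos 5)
Your proof is correct. It reaches the same conclusion by a route that is the same in spirit (verify the identity semantically in the algebras $\a{P}(V)$, $V\in\u$, using the diagonalization closure) but organized differently in two respects. For the displayed equation, the paper argues pointwise that $(V,f,\iota)\models X^{q}\cdot -c_i-d_{ij}$ forces $(V,f,\iota)\models -c_0-d_{01}$, manufacturing the needed witnesses by exactly the two-move diagonalization trick that appears in your propagation step; but it never isolates a set like your $E$. Your characterization of $\sim C_i(\sim D_{ij})$ as the index-free set of isolated constant sequences is a genuine strengthening: it shows $-c_0-d_{01}=-c_i-d_{ij}$ outright, before intersecting with $X^{q}$, and it makes transparent the paper's later remark that $-c_0-d_{01}$ itself is zero-dimensional. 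For the ``consequently'' clause the paper stays inside the equational calculus: from $c_i\tau\le d_{ij}$ and \ref{eqc7} it derives $\tau=d_{ij}\cdot c_i(d_{ij}\cdot\tau)=d_{ij}\cdot c_i\tau=c_i\tau$, whereas you return to the semantics and use that points of $E$ admit no nontrivial $k$-variant, so $C_kX^{q}$ and $X^{q}$ agree on $E$. Both arguments are valid; the paper's is shorter and purely syntactic once the first identity is in hand, while yours buys a concrete structural description of the atomic part of $\f$. Your closing observation is also accurate: the singleton required by the definition of ``requires diagonalization'' plays no role in this lemma and is only needed in Lemma~\ref{lem2} to show these elements are nonzero.
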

\begin{proof}
Suppose that $\u$, $i$, $j$ and $q$ are as required above. Let $V\in\u$, $f\in V$ and $\iota:X\rightarrow\mathcal{P}(V)$ be such that $(V,f,\iota)\models X^{q}\cdot -c_i-d_{ij}$. Then $f(i)=f(j)$. Suppose that $f(0)\not=f(i)$. Since $\u$ requires diagonalization then $f(i/f(0))\in V$, and $(V,f(i/f(0)),\iota)\models -d_{ij}$. This implies that $(V,f,\iota)\models c_i-d_{ij}$ which contradicts the assumptions. Hence, $f(0)=f(i)$ and similarly $f(1)=f(i)$. Now, we show that $(V,f,\iota)\models -c_0-d_{01}$. Suppose towards a contradiction that $(V,f,\iota)\models c_0-d_{01}$. Then there is $u$ in the base of $V$ and $g=f(0/u)$ such that $(V,g,\iota)\models -d_{01}$. Hence, $u\not=f(i)$. By assumptions, we have $g_1=g(i/g(0))\in V$ and $g_2=g_1(0/g(j))\in V$. Then $g_2=f(i/u)$, which implies $(V,f,\iota)\models c_i-d_{ij}$. This contradicts the assumptions. Thus, $(V,f,\iota)\models -c_0-d_{01}$. We have shown that $\cu\models X^{q}\cdot -c_i-d_{ij}\leq X^{q}\cdot-c_0-d_{01}$. The desired follows by the symmetry of indices. 

Let $\tau\myeq X^{q}\cdot-c_0-d_{01}$. To show that $\tau$ is zero-dimensional, we need to prove that $c_i\tau=\tau$. By the first part, we have

$$c_i\tau=c_i(X^{q}\cdot -c_i-d_{ij})=c_iX^q\cdot -c_i-d_{ij}\leq -c_i-d_{ij}\leq d_{ij}.$$
Thus,
\begin{eqnarray*}
\tau&=&d_{ij}\cdot \tau\\
&=& d_{ij}\cdot c_i(d_ {ij}\cdot \tau) \hspace{2cm} \text{by (Eq 7)}\\
&=& d_{ij}\cdot c_i\tau\\
&=& c_i\tau.
\end{eqnarray*}
Hence, $\tau=X^{q}\cdot-c_0-d_{01}$ is zero-dimensional and we are done.
\end{proof}
Now, we will show that each of the zero-dimensional elements, given in the above lemma, is an atom in the free algebra $\f$.
\begin{lem}\label{lem2}
Suppose that $\u$ requires diagonalization. Let $i,j\in\alpha$ be such that $i\not=j$, and let $q\in{^X\{-1,1\}}$ be a choice function. Then $X^{q}\cdot -c_0-d_{01}$ is an atom in the free algebra $\f$.
\end{lem}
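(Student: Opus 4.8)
The plan is to show that $X^q \cdot -c_0-d_{01}$ is an atom by proving it has exactly one nonzero subterm up to provable equality in $\cu$, or equivalently that whenever $\cu \not\models \tau = 0$ for $\tau \le X^q \cdot -c_0-d_{01}$, we must have $\cu \models \tau = X^q \cdot -c_0-d_{01}$. I would approach this semantically, exploiting the fact established in Lemma~\ref{lem} that this element is zero-dimensional together with the fact that $\u$ requires diagonalization, so $\u$ contains a singleton unit. The key idea is that on any singleton (or more generally on the ``fully collapsed'' part of a unit where all coordinates agree), a zero-dimensional element is forced to behave rigidly.

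\textbf{Reduction to a single point.}
First I would note that to prove $X^q \cdot -c_0-d_{01}$ is an atom, it suffices to show that for any term $\tau$, if $\cu \not\models \tau \cdot X^q \cdot -c_0-d_{01} = 0$, then $\cu \models X^q \cdot -c_0-d_{01} \le \tau$. Since $\u$ requires diagonalization, it contains a singleton $\{f_0\}$; on this unit every $D_{ij}$ is the whole space and every $C_i$ is the identity, so $\a{P}(\{f_0\})$ is a two-element algebra in which $-c_0-d_{01}$ evaluates to $\emptyset$. This tells me the ``witness'' for nonzeroness of $X^q \cdot -c_0-d_{01}$ cannot come from a singleton, so I must work on a general $V \in \u$ but restrict attention to sequences $f \in V$ with $f(0) \ne f(1)$ (so that $f \in {-}d_{01}$) that moreover lie outside every $c_0$-shift landing in $d_{01}$.

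\textbf{The core argument: collapsing a witness.}
Suppose $\cu \not\models \tau \cdot X^q \cdot -c_0-d_{01} = 0$, witnessed by $(V, f, \iota)$. As in the proof of Lemma~\ref{lem}, the condition $(V,f,\iota) \models -c_i-d_{ij}$ for the relevant indices forces the coordinates of $f$ to collapse: the diagonalization-requirement lets me replace coordinate values and conclude $f(0) = f(1) = f(i)$ for all indices $i$ appearing, so $f$ is essentially constant on the ``active'' coordinates. The heart of the matter is then to show that the membership of $f$ in the interpretation of $\tau$ is determined \emph{solely} by the choice function $q$, i.e. by which of $\iota(x_k)$ contain $f$; since $(V,f,\iota)\models X^q$, these memberships are exactly prescribed by $q$. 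I would argue by induction on the structure of $\tau$ that, for a sequence $f$ all of whose active coordinates are equal and which satisfies $-c_0-d_{01}$, the truth value $(V,f,\iota)\models \tau$ depends only on $q$ — the cylindrifications $c_i$ cannot escape the collapsed point (any shift reintroducing a distinct value would, via diagonalization, produce a witness for some $c_i-d_{ij}$, contradicting $f \in -c_0-d_{01}$), and the diagonals $d_{ij}$ all evaluate to ``true'' at such an $f$. Hence every such $f$ lies in the interpretation of $\tau$ exactly when a fixed Boolean combination of the $q$-values holds, which gives $\cu \models X^q\cdot -c_0-d_{01} \le \tau$.

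\textbf{Main obstacle.}
The delicate step is the inductive claim that no cylindrification $c_i\tau$ can hold at the collapsed sequence $f$ unless $\tau$ itself already holds at some coordinate-shift of $f$ that \emph{remains} within the collapsed region. The danger is a shift $f(i/u)$ with $u$ a genuinely new value in the base that satisfies an inner subterm; I expect to handle this exactly as in Lemma~\ref{lem}, using the ``requires diagonalization'' closure to first move to $f(i/u)$ and then diagonalize the offending coordinate back to a value equal to the others, showing any such escape would itself witness $c_0-d_{01}$ at $f$ and contradict $f \in -c_0-d_{01}$. Making this bookkeeping uniform across all indices $i$ simultaneously — rather than one pair at a time — is where the care is needed, and it is the reason the hypothesis that $\u$ \emph{requires} (not merely supports) diagonalization is essential here.
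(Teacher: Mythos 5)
Your overall strategy for minimality (show that at any point of any unit satisfying $X^{q}\cdot -c_0-d_{01}$, the truth value of an arbitrary term is determined by $q$ alone, by an induction in which cylindrifications cannot escape the collapsed point) is essentially a semantic rendering of the paper's algebraic induction, which uses zero-dimensionality of $\tau=X^{q}\cdot -c_0-d_{01}$ and the fact that $\tau$ decides every generator and every diagonal. That part of your plan is workable: since $f\models -c_i-d_{ij}$ for all $i\neq j$ (by Lemma~\ref{lem}), every $i$-shift of $f$ lying in $V$ must coincide with $f$, so each $C_i$ acts as the identity at $f$ and each $D_{ij}$ is true there, and the induction closes.

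However, your second paragraph contains a concrete error that leaves the nonzeroness of $X^{q}\cdot -c_0-d_{01}$ unproved and is inconsistent with the rest of your argument. On a singleton unit $\{w\}$ closed under diagonalization, $w$ must be a constant sequence, so $D_{01}=\{w\}$, $-D_{01}=\emptyset$, $C_0\emptyset=\emptyset$, and hence $-c_0-d_{01}$ evaluates to the \emph{whole} one-element unit, not to $\emptyset$. Consequently the singleton is not an obstruction but is precisely the witness that $X^{q}\cdot -c_0-d_{01}\neq 0$ (with $\iota(x_k)=\{w\}$ or $\emptyset$ according to $q$); this is exactly why the definition of ``requires diagonalization'' insists that $\u$ contain a singleton, and it is how the paper establishes nonzeroness. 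For the same reason, your claim that one should restrict to $f$ with $f(0)\neq f(1)$ is backwards: $f\models -c_0-d_{01}$ forces $f(0)=f(1)$ (take $g=f$ in the definition of $C_0$), as your own third paragraph correctly assumes when it collapses all active coordinates of $f$. On many natural units (e.g.\ a full square ${}^{\alpha}U$ with $|U|\geq 2$) the element $-c_0-d_{01}$ is empty, so without the singleton you have no witness at all and the ``atom'' claim would fail at the first hurdle. Fix the computation on the singleton and delete the $f(0)\neq f(1)$ restriction, and the remainder of your argument goes through.
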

\begin{proof}
Suppose that $\u$ requires diagonalization. Let $i,j\in\alpha$ be such that $i\not=j$, and let $q\in{^X\{-1,1\}}$ be a choice function. Let $\tau\myeq X^{q}\cdot -c_0-d_{01}$. Let $\{w\}\in\u$, such unit is guaranteed to exist by the assumption that $\u$ requires diagonalization. Define $\nu:X\rightarrow\{\emptyset,\{u\}\}$ as follows: For each $x_k\in X$, let $\nu(x_k)=\{w\}$ if $q(x_k)=1$ and $\nu(x_k)=\emptyset$ otherwise. Clearly, $(\{w\},w,\nu)\models\tau$, i.e., $\tau$ is not zero in $\f$. To prove that $\tau$ is an atom in $\f$, it is enough to prove the following: For any term $\sigma\in T_{\alpha}(X)$,
\begin{equation}\label{atom}
\text{either } \ \cu\models\tau\cdot\sigma=0 \ \text{ or } \ \cu\models\tau\cdot-\sigma=0.
\end{equation}
We prove \eqref{atom} by induction on the complexity of the term $\sigma$. Obviously, \eqref{atom} holds if $\sigma=x_k$ for some $x_k\in X$. Also,  Lemma~\ref{lem} guarantees that \eqref{atom} is true if $\sigma=d_{ij}$ for some $i,j\in\alpha$. It is not hard to see that the induction step holds for the Boolean connectives. We will show that the induction step also holds for the cylindrification operations. To do that, we will use that fact that cylinderifications are additive and complemented operators. Let $\sigma=c_k\sigma'$ for some $\sigma'\in T_{\alpha}(X)$ and $k\in\alpha$. Remember that $\tau$ is zero-dimensional, so $\cu\models c_k\tau=\tau$ and $\cu\models c_k-\tau=\tau$. By the induction hypothesis, we have one of the following cases.
\begin{enumerate}
\renewcommand{\theenumi}{(\alph{enumi})}
\renewcommand{\labelenumi}{\theenumi}
\item Either, $\cu\models\tau\cdot\sigma'=0$. In this case, we have 
$$\cu\models\tau\cdot\sigma=\tau\cdot c_k\sigma'=c_k\tau\cdot c_k\sigma'=c_k(\tau\cdot\sigma')=0.$$
\item Or, $\cu\models\tau\cdot-\sigma'=0$. Here, 
$$\cu\models \tau\cdot-\sigma=-(-\tau+\sigma)=-(c_k-\tau+c_k\sigma')=-c_k(-\tau+\sigma')\leq -(-\tau+\sigma')=0.$$
\end{enumerate}
We have proved \eqref{atom}. Therefore, $\tau=X^{q}\cdot -c_0-d_{01}$ is an atom in $\f$.
\end{proof}
We showed that $\f$ contains at least $2^m$ zero-dimensional atoms if $\u$ requires diagonalization. Note that, in this case, the sum of all these atoms in $\f$ is equal to $-c_0-d_{01}$ which can be shown to be zero-dimensional element by the argument used in Lemma~\ref{lem}. In the following section, we will prove that $\f$ does not contain any extra atom. 
\section{The non-atomic part in $\f$}
For each term $\sigma\in T_{\alpha}(X)$, we let $index(\sigma)$ be the set of all indices $i\in\alpha$ that appear in $\sigma$. Let $\Gamma\subseteq\alpha$ and let $f,g$ be two sequences of length $\alpha$. We write $f\equiv_{\Gamma}g$ if and only if $f(k)=g(k)$ for each $k\in\alpha\sim\Gamma$. We start with the following proposition.
\begin{prop}\label{prop} Suppose that $\u$ supports diagonalization. Then there is no atom below $c_0-d_{01}$ in the free algebra $\f$.
\end{prop}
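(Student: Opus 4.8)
The plan is to show that every nonzero element lying below $c_0-d_{01}$ can be split, hence is not an atom. Represent such an element by a term $\sigma\in T_{\alpha}(X)$ with $\cu\models\sigma\le c_0-d_{01}$ and $\cu\not\models\sigma=0$, and put $\Gamma=index(\sigma)$, a finite subset of $\alpha$. Fix two fresh indices $k,k'\in\alpha\sim(\Gamma\cup\{0,1\})$ with $k\ne k'$. I will split $\sigma$ by the diagonal $d_{kk'}$, that is, I will prove $\cu\not\models\sigma\cdot d_{kk'}=0$ and $\cu\not\models\sigma\cdot-d_{kk'}=0$; since both meets lie below $\sigma$ and are disjoint, $\sigma$ cannot be an atom. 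Using two fresh coordinates (rather than pairing a fresh $k$ with the entangled coordinate $1$) is what will let the argument ignore any cylindrifications $\sigma$ might perform on $0$ or $1$.

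Since $\cu\not\models\sigma=0$, fix a witness $(V,f,\iota)\models\sigma$ with $V\in\u$. Because $\cu\models\sigma\le c_0-d_{01}$ we have $(V,f,\iota)\models c_0-d_{01}$, so there is a value $u$ with $g=f(0/u)\in V$ and $u\ne f(1)$. Both required witnesses will be produced by modifying $f$ only on the fresh coordinates $k,k'$, using that $\u$ supports diagonalization to keep the enlarged units inside $\u$. For the on-diagonal part I would take $p=f(k/f(k'))$: it agrees with $f$ off $k$, satisfies $p(k)=f(k')=p(k')$, and $V\cup\{p\}\in\u$ by one application of support of diagonalization. For the off-diagonal part I want a point agreeing with $f$ off $\{k,k'\}$ that separates these two coordinates. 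If $f(0)\ne f(1)$ this is immediate: set coordinate $k$ to $f(0)$ and coordinate $k'$ to $f(1)$ by two diagonalization steps applied to $f$, both values already occurring in $f$. If $f(0)=f(1)$ I would route through $g$: set $g_1=g(k/g(0))$ so that the $k$-th coordinate becomes $u$, then $g_2=g_1(k'/g_1(1))$ so that the $k'$-th coordinate becomes $f(1)$, and finally reset the $0$-th coordinate by $g_3=g_2(0/g_2(1))$, which is legitimate because $g_2(1)=f(1)=f(0)$. The resulting point agrees with $f$ off $\{k,k'\}$ and has $k$-value $u\ne f(1)$, its $k'$-value. In either case every intermediate point is obtained by support of diagonalization, so the final unit lies in $\u$.

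The main obstacle is to argue that these modified points still satisfy $\sigma$ --- call it a fresh-coordinate insensitivity lemma: if $p$ agrees with $f$ off the fresh set $\{k,k'\}\subseteq\alpha\sim\Gamma$, then $p\models\sigma$ in the enlarged unit, under the evaluation that puts each new point into exactly those $\iota$-sets containing the point it was cloned from. This is routine for the Boolean connectives, for the diagonals $d_{rs}$ with $r,s\ne k,k'$, and for the variables, but it is delicate for the cylindrifications: in the relativized semantics $p\models c_r\tau$ quantifies over all $r$-variants of $p$ lying inside the unit, so to transport a satisfying variant for $f\models c_r\tau$ over to $p$ one must have already adjoined the corresponding clone of that variant. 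I would therefore prove the lemma by first fixing a finite tree $S\subseteq V$ of points witnessing $(V,f,\iota)\models\sigma$ (one chosen $r$-variant per cylindrification, down to the finite quantifier depth of $\sigma$), and then adjoining to $V$, by support of diagonalization, the clones on $k,k'$ of all points of $S$, verifying the back-and-forth between each $q\in S$ and its clone for every $c_r$ with $r\in\Gamma$.

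The genuinely hard point is the interaction of this cloning with $c_0$, since the off-diagonal value $u$ is tied to coordinate $0$, which $\sigma$ may cylindrify: the clones must be chosen compatibly with the $0$-variants occurring in $S$, and support of diagonalization permits resetting a coordinate only to a value already present in the sequence. Keeping this value-matching constraint consistent across the whole finite cloning --- so that each $q\in S$ admits a clone carrying the prescribed fresh values while the resulting point sits in a unit of $\u$ --- is where the real work lies, and is the step I expect to occupy the bulk of the proof. Once the insensitivity lemma is in hand, $\sigma\cdot d_{kk'}\ne0$ and $\sigma\cdot-d_{kk'}\ne0$ follow, and since $\sigma$ was an arbitrary nonzero element below $c_0-d_{01}$, there is no atom below $c_0-d_{01}$ in $\f$.
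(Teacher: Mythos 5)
Your strategy diverges from the paper's at a point that turns out to be fatal: you try to split $\sigma$ by a fresh diagonal $d_{kk'}$, which forces you to relocate the witnessing sequence $f$ (together with its whole tree $S$ of cylindrification witnesses) to clones carrying prescribed values at the coordinates $k,k'$. But the hypothesis that $\u$ supports diagonalization only lets you adjoin points of the form $h(i/h(j))$ for $h$ already in the unit, i.e.\ you may set a coordinate only to a value already occurring \emph{in that same sequence}. For the off-diagonal clone this is exactly the value-matching constraint you flag at the end: since the edges of $S$ are $r$-variances with $r\in\Gamma$ and $k,k'\notin\Gamma$, all clones must carry the \emph{same} pair of distinct values at $(k,k')$, and there is no reason such a common pair occurs in every node of $S$ (nodes differ from $f$ at $\Gamma$-coordinates and may, for instance, be constant sequences). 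You explicitly defer this step as ``where the real work lies,'' but it is not deferred bookkeeping --- it is the place where the argument fails for a general $\u$ that merely supports diagonalization. There is also a second unaddressed direction in your insensitivity induction: after adjoining clones, original points of $V$ can become spurious $r$-variants of a clone and hence spurious witnesses for $c_r$, so negated cylindrifications need not transfer. The diagonal-splitting idea you propose is essentially what the paper uses only in part (3) of Theorem~\ref{realmain}, where $\u$ is the class of \emph{all} units and one may build a brand-new unit $V^*$ outright rather than reach it by single diagonalization steps.

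The paper's proof of Proposition~\ref{prop} avoids relocating $f$ altogether. It keeps the witness $(V,f,\iota)$ fixed, adjoins a \emph{single} new point $g(i/g(0))$ (resp.\ $g(i/g(j))$), which is of the form permitted by support of diagonalization, and splits $\tau\cdot c_0-d_{01}$ by the term $c_0\bigl(-d_{01}\cdot c_i(x_0\cdot -d_{ij})\bigr)$, exploiting the freeness of the generator $x_0$: the two evaluations $\iota_1,\iota_2$ agree on the old points (after cutting $x_0$ down to $W$) but differ on whether the new point lies in $x_0$. This is precisely why the proposition needs $m\ge 1$, an assumption your argument never invokes --- a warning sign, since the paper stresses that its method for these classes depends essentially on $m\ge 1$. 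To repair your approach you would either have to restrict to the class of all units (where it reproduces part (3) of Theorem~\ref{realmain}) or switch to the paper's generator-based splitting.
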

\begin{proof}
Suppose that $\u$ supports diagonalization. Let $\tau\in T_{\alpha}(X)$ be a cylindric-term that satisfies $\cu\not\models\tau\cdot c_0-d_{01}=0$. We prove that $\tau\cdot c_0-d_{01}$ is not an atom in $\f$. Let $V\in\u$, $f\in V$ and $\iota$ be an evaluation such that $(V,f,\iota)\models\tau\cdot c_0-d_{01}$. We can find $g\in V$ such that $g=f(0/u)$, for some $u\not=f(1)$, and $(V,g,\iota)\models-d_{01}$. Let $\Gamma=index(\tau)\cup\{0,1\}$, since every term is built up from finitely many symbols in the signature $ct_{\alpha}$ then $\Gamma$ must be finite. Let $i,j\in\alpha\sim\Gamma$ be such that $i\not=j$.

\noindent{\textbf{Case 1:}} Suppose that $g(i)=g(j)$. Recall that $g(0)\not=g(1)$. So, without loss of generality,  we may assume that $g(0)\not=g(j)$. Let 
$$W=\{h\in V: h(i)=h(j)\} \ \ \text{ and } \ \ V'=V\cup\{g(i/g(0))\}.$$
Note that $V'\in\u$ because $\u$ supports diagonalization. Define $\iota_1,\iota_2:X\rightarrow\mathcal{P}(V')$ as follows: For each $x_k\in X$,
$$\iota_1(x_k)=\iota(x_k)\cap W \ \ \text{ and } \ \ \iota_2(x_k)=\iota_1(x_k)\cup\{g(i/g(0))\}.$$
For each $\sigma\in T_{\alpha}(X)$ and each $h\in V$, if $index(\sigma)\subseteq \Gamma$ and $h\equiv_{\Gamma}g$, then
\begin{equation}\label{B12}
(V',h,\iota_1)\models\sigma\iff(V,h,\iota)\models\sigma\iff(V',h,\iota_2)\models\sigma.
\end{equation}
This can be shown by a simple induction argument on the complexity of the term $\sigma$ as follows. Obviously, \eqref{B12} is true for the case when $\sigma=x_k\in X$ and when $\sigma=d_{k\lambda}$, $k,\lambda\in\Gamma$. Also, it is not hard to see that the induction step holds for the Boolean connectives. Now, suppose that $\sigma=c_k\sigma'$ and $index(\sigma)\subseteq\Gamma$. That means $k\in\Gamma$ and $index(\sigma')\subseteq\Gamma$, too. Let $h\in V$ be such that $h\equiv_{\Gamma}g$. Note that for any $h'\in V'$, if $h\equiv_k h'$ then $h'\equiv_{\Gamma}g$ and so $h'\in V$. Now by the induction hypothesis, we have
\begin{eqnarray*}
(V',h,\iota_1)\models\sigma &\iff& (\exists h'\in V') \ [h\equiv_k h' \text{ and } (V',h',\iota_1)\models\sigma']\\
&\iff& (\exists h'\in V) \ [h\equiv_k h' \text{ and } (V',h',\iota_1)\models\sigma']\\
&\iff& (\exists h'\in V) \ [h\equiv_k h' \text{ and } (V,h',\iota)\models\sigma']\\
&\iff& (V,h,\iota)\models\sigma.
\end{eqnarray*}
Similarly, $(V',h,\iota_2)\models\sigma\iff (V,h,\iota)\models\sigma$. We have shown that \eqref{B12} is true. Thus, in particular, we have
\begin{equation}\label{eq}
(V',f,\iota_1)\models\tau\cdot c_0-d_{01} \ \ \text{ and } \ \ (V',f,\iota_2)\models\tau\cdot c_0-d_{01}.
\end{equation}
By the choice of $\iota_1$, we have $(V',h,\iota_1)\models -x_0$ for each $h\not\in W$. Hence, 
\begin{equation}\label{eq1}
(V',f,\iota_1)\models -c_0(-d_{01}\cdot c_i(x_0\cdot -d_{ij})).
\end{equation}
On the other hand, $(V',g(i/g(0)),\iota_2)\models x_0\cdot -d_{ij}$ and $(V',g,\iota_2)\models -d_{01}\cdot c_i(x_0\cdot -d_{ij})$. Whence, it follows that 
\begin{equation}\label{eq2}
(V',f,\iota_2)\models c_0(-d_{01}\cdot c_i(x_0\cdot -d_{ij})).
\end{equation}
Therefore, by \eqref{eq}, \eqref{eq1} and \eqref{eq2}, $\tau\cdot c_0-d_{01}$ is not an atom in $\f$.

\noindent{\textbf{Case 2:}} Suppose that $g(i)\not=g(j)$. Let 
$$W=\{h\in V: h(i)\not=h(j)\} \ \ \text{ and } \ \ V'=V\cup\{g(i/g(j))\}.$$
Again, the assumption on $\u$ guarantees that $V'\in\u$. Define $\iota_1,\iota_2:X\rightarrow\mathcal{P}(V')$ as follows: For each $x_k\in X$,
$$\iota_1(x_k)=\iota(x_k)\cap W \ \ \text{ and } \ \ \iota_2(x_k)=\iota_1(x_k)\cup\{g(i/g(j))\}.$$
Similarly to the above case, cf. \eqref{B12}, one can easily show that 
\begin{equation}\label{2eq}
(V',f,\iota_1)\models \tau\cdot c_0-d_{01} \ \ \text{ and } \ \ (V',f,\iota_2)\models \tau\cdot c_0-d_{01}.
\end{equation}
Moreover, the choice of $\iota_1$ and $\iota_2$ implies
\begin{equation}\label{2eq2}
(V',f,\iota_1)\models -c_0(-d_{01}\cdot c_i(x_0\cdot d_{ij})) \ \ \text{ and } \ \ (V',f,\iota_1)\models c_0(-d_{01}\cdot c_i(x_0\cdot d_{ij})).
\end{equation}
Therefore, again by \eqref{2eq} and \eqref{2eq2}, $\tau\cdot-d_{01}$ is not an atom in $\f$.
\end{proof}
Now, we are ready to prove the main result of this paper.
\begin{proof}[Proof of Theorem~\ref{realmain}]
Let $\u$ be a class of $\alpha$-dimensional units and let $m\geq 1$ be a finite cardinal. 
\begin{enumerate}
\renewcommand{\theenumi}{(\arabic{enumi})}
\renewcommand{\labelenumi}{\theenumi}
\item If $\u$ supports diagonalization, then Proposition~\ref{prop} implies that $\f$ is not atomic. 
\item Suppose that $\u$ requires diagonalization. By Lemma~\ref{lem}, Lemma~\ref{lem2} and Proposition~\ref{prop}, we have shown that $\f$ contains exactly $2^m$-many atoms, each of which is zero-dimensional. Let $\a{A}\subseteq\f$ be the subalgebra generated by $\{a\cdot -c_0-d_{01}:a\in\f\}$, and let $\a{B}\subseteq \f$ be the subalgebra generated by $\{a\cdot c_0-d_{01}:a\in\f\}$. It is not hard to see that $a\mapsto (a\cdot-c_0-d_{01},a\cdot c_0-d_{01})$ is an isomorphism from $\f$ onto $\a{A}\times\a{B}$. Clearly, $\a{A}$ and $\a{B}$ satisfy the desired of item (c).
\item Suppose that $\u$ is the class of all $\alpha$-dimensional units. Let $Y$ be the generating set of the free algebra $\fz$. Let $\tau\in T_{\alpha}(Y)$ be any term such that $\cu\not\models\tau=0$. We will show that $\tau$ is not an atom in $\fz$.

Let $V\in\u$, $f\in V$ and $\iota:Y\rightarrow\mathcal{P}(V)$ be an evaluation such that $(V,f,\iota)\models\tau$. Let $\Gamma= index(\tau)$ and let $i,j \in\alpha\sim\Gamma$ be such that $i\not=j$. Pick brand new elements $a,b$ that are not in the base of $V$ such that $a = b\iff f(i)\not=f(j)$. For every $h\in V$ with $h\equiv_{\Gamma} f$, let $h^{*}$ be the sequence given as follows: $h^{*}(i) = a$, $h^{*}(j) = b$ and $h^{*}(k) = h(k)$, for every $k\in\alpha\sim\{i, j\}$. Set $V^{*} = \{h^{*} : h \in V \text{ and } h\equiv_{\Gamma}f \}$. Define the evaluation $\iota^{*}:Y\rightarrow\mathcal{P}(V)$ as follows. For each $y\in Y$, let $\iota^{*}(y)=\{h^{*}\in V^{*}: h\in\iota(y)\}$. 
%By induction on terms (and similarly to \eqref{B12}), one can easily check the following. 
We are going to show that for every $\sigma\in T_{\alpha}(Y)$ and every $h\in V$, if $index(\sigma)\subseteq\Gamma$ and $h\equiv_{\Gamma}f$ then
\begin{equation}\label{B13}
(V,h,\iota)\models\sigma\iff(V^{*},h^{*},\iota^{*})\models\sigma.
\end{equation}
This can be shown by an induction on the complexity of the term $\sigma$ as follows. Obviously, \eqref{B13} is true for the case when $\sigma=x_k\in X$ and when $\sigma=d_{k\lambda}$, $k,\lambda\in\Gamma$. Again, it is not hard to see that the induction step holds for the Boolean connectives. Now, suppose that $\sigma=c_k\sigma'$ and $index(\sigma)\subseteq\Gamma$. That means $k\in\Gamma$ and $index(\sigma')\subseteq\Gamma$, too. Let $h\in V$ be such that $h\equiv_{\Gamma}f$. By the induction hypothesis, we have
\begin{eqnarray*}
(V^{*},h^{*},\iota^{*})\models\sigma &\iff& (\exists g^{*}\in V^{*}) \ [g^{*}\equiv_k h^{*} \text{ and } (V^{*},g^{*},\iota^{*})\models\sigma']\\
&\iff& (\exists g\in V) \ [g\equiv_k h \text{ and } (V,g,\iota)\models\sigma']\\
&\iff& (V,h,\iota)\models\sigma.
\end{eqnarray*}
We have shown that \eqref{B13} is true. Thus, in particular, we have $(V^{*} , f^{*} ,\iota^{*})\models\tau$ . But, by the choice of $a$ and $b$, we have
\begin{equation}
(V,f,\iota)\models d_{ij}\iff (V^{*},f^{*},\iota^{*})\models -d_{ij}.
\end{equation}
Therefore, both $\tau\cdot d_{ij}$ and $\tau\cdot-d_{ij}$ are non-zero in the free algebra $\fz$, i.e., $\tau$ is not an atom in $\fz$, as desired.\qedhere
\end{enumerate}
\end{proof}

By the argument we used in (Fact 3), see page \pageref{fact3} herein, we know that each of the free algebras $\mathfrak{Fr}_0\mathrm{CA}_{\alpha}$ and $\mathfrak{Fr}_0\mathrm{Gs}_{\alpha}$ contains exactly $2^0=1$ atom which happens to be zero-dimensional. This argument cannot be used to obtain similar results for the $0$-generated free algebras of the classes $\mathrm{D}_{\alpha}$ and $\mathrm{G}_{\alpha}$ because none of these is locally finite dimensional. Moreover, our method here to obtain the results concerning these classes depends essentially on the assumption $m\geq 1$, see Proposition~\ref {prop}.% So we do not know whether any of $\mathfrak{Fr}_0\mathrm{D}_{\alpha}$ and $\mathfrak{Fr}_0\mathrm{G}_{\alpha}$ contains a non-zero-dimensional atom. Worse: we do not know whether any of $\mathfrak{Fr}_0\mathrm{D}_{\alpha}$ and $\mathfrak{Fr}_0\mathrm{G}_{\alpha}$ is atomic.
\begin{prob}
Are there any non-zero-dimensional atoms in $\mathfrak{Fr}_0\mathrm{D}_{\alpha}$ or in $\mathfrak{Fr}_0\mathrm{G}_{\alpha}$? Is any of  $\mathfrak{Fr}_0\mathrm{D}_{\alpha}$ and $\mathfrak{Fr}_0\mathrm{G}_{\alpha}$ atomic?
\end{prob}
\section{On Tarski's conjecture}
Now, we prove Theorem~\ref{nemadd}. This theorem shows a difference between $\mathfrak{Fr}_m\mathrm{Gs}_{\alpha}$ and  $\mathfrak{Fr}_m\mathrm{CA}_{\alpha}$ and points to the direction that Tarski's conjecture \cite[Remark 2.5.12]{hmt1} might fail. We will use several notions from the theory of cylindric algebras, e.g., generalized cylindrifications \cite[Definition~1.7.1]{hmt1}, substitutions \cite[Definition~1.5.1]{hmt1}, reducts of $\mathrm{CA}_{\alpha}$'s,  \cite[Definition 2.6.1]{hmt1} and neat reducts of $\mathrm{CA}_{\alpha}$'s \cite[2.6.28]{hmt1}. For instance, for each $x,y\in \mathfrak{Fr}_m\mathrm{CA}_{\alpha}$ and each $i,j\in\alpha$ with $i\not=j$, we let
$$x\oplus y\myeq (x\cdot -y)+(-x\cdot y), \ \ \ c_{(2)}x\myeq c_0c_1x \ \ \text{ and } \ \ s^i_jx\myeq c_i(x\cdot d_{ij}).$$
\begin{proof}[Proof of Theorem~\ref{nemadd}] Remember that $m\geq 1$. Let $x$ be one of the free generators of $\mathfrak{Fr}_m\mathrm{CA}_{\alpha}$. We will define a $\mathrm{CA}_{\alpha}$-term $\tau(x)$ with the desired property as follows: $\tau(x)\myeq x\cdot\chi(x)$, where 
$$\chi(x)\myeq
-c_{(2)}(c_0x\oplus c_0y)-c_{(2)}(c_1x\oplus c_1y)-c_{(2)}[c_1(d_{01}\cdot c_0x)\cdot c_0x-d_{01}]-c_{(2)}[c_0(d_{01}\cdot c_1x)\cdot c_1x-d_{01}],$$
and $y\myeq c_0x\cdot c_1x-x$. Clearly, $\tau(x)\in\mathfrak{Fr}_m\mathrm{CA}_{\alpha}$. Now we prove that $\tau(x)\not=0$ in $\mathfrak{Fr}_m\mathrm{CA}_{\alpha}$\footnote{We note that $\tau(x)=0$ in $\mathrm{Gs}_{\alpha}$ by Theorem~\ref{main}.}.

\begin{cl}\label{firstclaim}
$\tau(x)\not=0$ in $\mathfrak{Fr}_m\mathrm{CA}_{\alpha}$.
\end{cl}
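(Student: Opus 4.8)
The goal is to certify that the equation $\tau(x)=0$ fails in $\mathrm{CA}_{\alpha}$. Since $\mathfrak{Fr}_m\mathrm{CA}_{\alpha}$ is free, $\tau(x)\neq 0$ holds there if and only if there is a single algebra $\mathfrak{A}\in\mathrm{CA}_{\alpha}$ together with an evaluation of the generators (sending all generators other than $x$ to $0$, say) under which $\tau$ takes a non-zero value, so the whole claim reduces to producing one such witness $\mathfrak{A}$. The footnote already records that $\tau(x)=0$ is valid throughout $\mathrm{Gs}_{\alpha}$, and hence throughout the variety of representable cylindric algebras; consequently any witness $\mathfrak{A}$ must be non-representable. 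In other words, the content of the claim is exactly that $\tau(x)=0$ is a representability-detecting equation separating $\mathrm{CA}_{\alpha}$ from $\mathrm{Gs}_{\alpha}$, and the work lies in exhibiting a non-representable member of $\mathrm{CA}_{\alpha}$ on which it is falsified.

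First I would decode $\chi(x)$. Because $c_{(2)}=c_0c_1$ and cylindrifications send only $0$ to $0$, for any element $z$ one has $-c_{(2)}z=1$ exactly when $z=0$. Writing $\chi(x)$ as the meet $-c_{(2)}\theta_1\cdot-c_{(2)}\theta_2\cdot-c_{(2)}\theta_3\cdot-c_{(2)}\theta_4$ of its four conjuncts, the simplest way to force $\tau(x)=x\cdot\chi(x)\neq 0$ is to arrange an evaluation with $a\neq 0$ (the value of $x$) and $\theta_1=\theta_2=\theta_3=\theta_4=0$, so that $\chi(a)=1$ and $\tau(a)=a$. Here $\theta_1,\theta_2$ say that the $c_0$- and $c_1$-cylinders of $a$ agree with those of $y=c_0a\cdot c_1a\cdot-a$, while $\theta_3,\theta_4$ say, after rewriting $c_1(d_{01}\cdot c_0a)=s^1_0c_0a$ and its mirror image, that $c_0a$ and $c_1a$ are \emph{diagonal-thin}, i.e. $s^1_0c_0a\cdot c_0a\leq d_{01}$ and $s^0_1c_1a\cdot c_1a\leq d_{01}$. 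Reading these four equations in a full square shows at once that $\theta_3=\theta_4=0$ forces the range and domain of $a$ to be at most singletons, whereupon $\theta_1=\theta_2=0$ forces $a=0$; this both reproves the footnote and, more usefully, pinpoints the configuration a witness must realize: an element $a\neq 0$ whose range- and domain-cylinders are proper but are nevertheless collapsed onto $d_{01}$ by the algebra's diagonal, with the ``complement relation'' $y$ still covering those same two cylinders.

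With the target configuration identified, I would construct the witness as the complex algebra of a suitable finite cylindric atom structure (cylindric frame) for $\mathrm{CA}_{\alpha}$, $\alpha\geq 3$. Concretely, I would specify a base structure carrying a non-genuine diagonal that identifies two honestly distinct points lying in the range- and domain-cylinders of a chosen element $a$, then close under the operations to obtain a subalgebra $\mathfrak{A}$ on which $a$, $c_0a$, $c_1a$ and $d_{01}$ realize the four equations above. The coordinates $\geq 2$ play no role in $\tau$, so I would let the frame be trivial outside $\{0,1\}$ and dilate the two-dimensional core to dimension $\alpha$; throughout, the substitution operators $s^i_j$ and the composite cylindrification $c_{(2)}$ let me keep every verification two-dimensional. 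Once $\mathfrak{A}$ is in hand, computing $\tau(a)=a\cdot\chi(a)=a\neq 0$ is immediate.

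The main obstacle is the construction of $\mathfrak{A}$ itself, and specifically the verification of axiom \ref{ca7}. Making the diagonal $d_{01}$ identify genuinely distinct points is precisely what breaks representability, but it is also exactly what threatens \ref{ca7} (the axiom forcing $s^0_1$ to be a Boolean endomorphism): a naive ``twisted diagonal'' on a full power set already fails \ref{ca7}, while passing to the $\sim$-invariant subalgebra merely reproduces a representable quotient. The delicate point is therefore to choose the frame so that $d_{01}$ collapses the relevant points \emph{without} this collapse being a genuine quotient of the base, i.e. to land outside the variety generated by $\mathrm{Gs}_{\alpha}$ while keeping $\mathfrak{A}$ a bona fide cylindric algebra. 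This is the single step that cannot be reduced to routine computation, and it is where the design of $\chi$ must be matched to a concrete non-representable algebra.
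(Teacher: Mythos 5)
There is a genuine gap: you have correctly reduced the claim to exhibiting one algebra $\mathfrak{A}\in\mathrm{CA}_{\alpha}$ and one element $a$ with $\chi^{\mathfrak{A}}(a)=1$ and $a\neq 0$, and your decoding of the four conjuncts of $\chi$ (each $-c_{(2)}\theta_k=1$ iff $\theta_k=0$) matches what the actual proof verifies. But you then stop at exactly the point where the proof begins: you never construct the witness, and you explicitly defer it as ``the single step that cannot be reduced to routine computation.'' Since the entire mathematical content of the claim is that witness, the proposal as written does not prove anything beyond the (correct) observation that the witness must be non-representable.

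Moreover, your framing of the obstacle --- choosing a frame whose diagonal $d_{01}$ ``identifies genuinely distinct points'' without breaking \ref{ca7} --- points in a harder direction than necessary. The paper's construction does not twist the diagonals at all: it takes the unit $B={}^{\alpha}\alpha\cup\{p'\}$, where $p$ is the identity sequence and $p'$ is a single new point, keeps $d_{ij}=\{q\in{}^{\alpha}\alpha:q(i)=q(j)\}$ as the honest diagonal (so $p'$ lies in no $d_{ij}$ with $i\neq j$), and defines $c_iX$ by pulling back along the map $h$ that sends $p'$ to $p$ and fixes everything else. Non-representability enters through the duplicated point $p'$, which is cylindrically indistinguishable from $p$ but set-theoretically distinct; \ref{ca7} survives because the diagonals themselves are untouched. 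With $a=\{p\}$ one gets $y=c_0a\cdot c_1a-a=\{p'\}$, whence $c_ia=c_iy$ for $i\in 2$ and the two diagonal-thinness conditions hold, giving $\chi(a)=1$ and $\tau(a)=a\neq 0$. To complete your argument you would need to supply this (or an equivalent) construction and check the axioms \ref{ca0}--\ref{ca7}; as it stands the key step is missing.
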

\begin{proof}
It suffices to construct an $\mathfrak{A}\in\mathrm{CA}_{\alpha}$ such that $\tau^{\mathfrak{A}}(a)\not=0$ for some $a\in A$. Let $p\in{^{\alpha}\alpha}$ be the identity sequence defined by: $p(i)=i$ for each $i\in\alpha$. Let $B={^{\alpha}\alpha}\cup\{p'\}$ for some $p'\not\in{^{\alpha}\alpha}$. Let $h:B\rightarrow{^{\alpha}\alpha}$ be defined by $h(q)=q$ if $q\in{^{\alpha}\alpha}$ and $h(p')=p$. Let $i,j\in\alpha$ be such that $i\not=j$ and let $X\subseteq B$. Define,

$\begin{array}{rl}
d_{ii}=&B,\\
d_{ij}=& \{q\in{^{\alpha}\alpha}:q(i)=q(j)\},\\
c_iX=&\{q\in B: (\exists t\in X) \ h(q)\equiv_ih(t)\}.
\end{array}$

We construct $\mathfrak{A}$ as follows: $\mathfrak{A}=\langle \mathcal{P}(B),\cup,\cap,\sim,\emptyset, B,c_i,d_{ij} \rangle_{i,j<\alpha}$. It is easy to check that $\mathfrak{A}$ satisfies the postulates \ref{ca0}-\ref{ca7} of Definition~\ref{ca}. Therefore, $\mathfrak{A}\in\mathrm{CA}_{\alpha}$. 

Let $a\myeq\{p\}$. Then $b\myeq c_0a\cdot c_1a-a=\{p'\}$. It can be checked that $c_0a=c_0b$, $c_1a=c_1b$, $c_1(d_{01}\cdot c_0a)\cdot c_0a\leq d_{01}$ and $c_0(d_{01}\cdot c_1a)\cdot c_1a\leq d_{01}$, hence $\chi^{\mathfrak{A}}(a)=1$, and $\tau^{\mathfrak{A}}(a)=a\not=0$.
\end{proof}
\begin{cl}
Let $i,j\in \alpha\sim 2$ be such that $i\not=j$. Then $\tau(x)\leq -d_{ij}$ in $\mathfrak{Fr}_m\mathrm{CA}_{\alpha}$.
\end{cl}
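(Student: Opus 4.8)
The plan is to read the asserted inequality $\tau(x)\le -d_{ij}$ as the identity $x\cdot\chi(x)\cdot d_{ij}=0$ and to prove it in the variety $\mathrm{CA}_{\alpha}$ itself: since $\mathfrak{Fr}_m\mathrm{CA}_{\alpha}$ satisfies exactly the equations of $\mathrm{CA}_{\alpha}$, it is enough to verify $\mathfrak{A}\models x\cdot\chi(x)\cdot d_{ij}=0$ for every $\mathfrak{A}\in\mathrm{CA}_{\alpha}$, every $x\in A$ and all distinct $i,j\notin\{0,1\}$. I emphasize that the argument must be purely equational and cannot pass through cylindric set algebras: by Claim~\ref{firstclaim} the element $\tau(x)$ is non-zero in $\mathrm{CA}_{\alpha}$, even though it collapses to $0$ in every representable algebra. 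The hypothesis $i,j\notin\{0,1\}$ is both available and decisive, because $\chi$ is built only from $c_0$, $c_1$, $c_{(2)}=c_0c_1$ and $d_{01}$; thus $i$ and $j$ lie outside the ``active plane'' of $\chi$, and this is exactly what will let $d_{ij}$ act as a barrier against the non-standard points that keep $\tau(x)$ from vanishing.

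First I would exploit the substitution operator $s^i_jx=c_i(x\cdot d_{ij})$. For $i\ne j$ it is a Boolean endomorphism, and since $i,j\notin\{0,1\}$ it commutes with $c_0$, $c_1$ and $c_{(2)}$ and fixes $d_{01}$ (standard properties of substitutions, see \cite[\S1.5]{hmt1}); hence it commutes through the whole term, giving $s^i_j\chi(x)=\chi(s^i_jx)$. Combining this with the identity $d_{ij}\cdot u=d_{ij}\cdot s^i_ju$ --- which is exactly \ref{eqc7} rewritten with $s^i_ju=c_i(u\cdot d_{ij})$ --- I would compute
\begin{equation*}
d_{ij}\cdot x\cdot\chi(x)=d_{ij}\cdot s^i_j\bigl(x\cdot\chi(x)\bigr)=d_{ij}\cdot w\cdot\chi(w),\qquad w:=s^i_jx .
\end{equation*}
The gain is that $c_iw=w$, because $c_is^i_j=s^i_j$. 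Intuitively, passing to $w$ freezes the coordinate $i$, and together with the factor $d_{ij}$ it confines the whole computation to the region $d_{ij}$; on that region no cylindrification occurring inside $\chi$ (all of which move only the coordinates $0$ and $1$) can reach the ``phantom'' witnesses responsible for $\tau\ne 0$. This is the algebraic form of the observation that, since $i\ne j$, a point satisfying $f(i)=f(j)$ cannot be moved onto the identity sequence of Claim~\ref{firstclaim} by changing only coordinates $0$ and $1$.

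It then remains to prove $d_{ij}\cdot w\cdot\chi(w)=0$ under $c_iw=w$, and here I would turn the ``single cell'' picture into equations. The two factors of $\chi$ that carry $d_{01}$, namely $-c_{(2)}[c_1(d_{01}\cdot c_0w)\cdot c_0w-d_{01}]$ and its mirror image, squeeze $w$ (over the frozen coordinates outside $\{0,1\}$) onto a single value in direction $0$ and a single value in direction $1$, that is, onto one cell of the $0,1$-plane. The two factors built from $y=c_0w\cdot c_1w-w$, namely $-c_{(2)}(c_0w\oplus c_0y)$ and $-c_{(2)}(c_1w\oplus c_1y)$, then force $c_0w=c_0y$ and $c_1w=c_1y$ on that plane; but on a single cell the boundary $y$ is empty, so $c_0y=0\ne c_0w$, and the product collapses to $0$. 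I expect this last step to be the main obstacle. Each of the four geometric assertions has to be derived as an equational consequence of \ref{ca0}--\ref{ca7}, using additivity and idempotence of the $c_k$, the identities \ref{eqc1}--\ref{eqc7}, and $c_id_{ij}=1$; and the delicate point is to run the ``single cell forces emptiness'' contradiction without ever naming a point, while making sure the derivation genuinely consumes both $c_iw=w$ and the factor $d_{ij}$ --- for dropping $d_{ij}$ must fail, the element $\tau(x)$ being non-zero by Claim~\ref{firstclaim}.
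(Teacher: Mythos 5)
Your first step---pushing everything through the substitution $s^i_j$ so that $d_{ij}\cdot x\cdot\chi(x)=d_{ij}\cdot w\cdot\chi(w)$ with $w=s^i_jx$ and $c_iw=w$---is sound and is essentially the same move the paper makes (it forms $s^i_j\tau$ and $s^i_j\eta$ and notes that the relevant identities, packaged there as a system $E(X,Y)$ of conditions in the indices $0,1$ only, are preserved by $s^i_j$). The genuine gap is your second step. You reduce the claim to ``$d_{ij}\cdot w\cdot\chi(w)=0$ under $c_iw=w$'' and then argue by a picture of cells, boundaries and phantom points; but an abstract $\mathrm{CA}_{\alpha}$ has no points, and you yourself flag that converting ``a single cell forces $y$ to be empty'' into a derivation from \ref{ca0}--\ref{ca7} is ``the main obstacle.'' That obstacle is the entire content of the claim: the conditions you extract from $\chi$ (that $c_0,c_1$ cannot separate $w$ from $y$, plus the two $d_{01}$ conditions) are \emph{consistent} in abstract $\mathrm{CA}_{\alpha}$ with $w\neq 0$ --- that is exactly what Claim~\ref{firstclaim} exhibits --- so no routine manipulation of those conditions alone can finish; the derivation must extract something substantial from $c_iw=w$, and you give no mechanism for doing so.

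The paper closes precisely this gap with a device you have explicitly forbidden yourself. You assert that the argument ``cannot pass through cylindric set algebras''; that is true only for $\alpha$-dimensional representables. The paper observes that $s^i_j\tau$ and $s^i_j\eta$, being fixed by $c_i$, lie in the neat reduct $\mathfrak{Nr}_2\mathfrak{Rd}_{2\cup\{i\}}\mathfrak{Fr}_m\mathrm{CA}_{\alpha}$, and invokes \cite[Theorem 3.2.65]{hmt2} to conclude that this two-dimensional neat reduct is a $\mathrm{Gs}_2$, i.e.\ representable. Inside that $\mathrm{Gs}_2$ your ``single cell'' reasoning becomes legitimate pointwise reasoning, and the system $E$ is refuted there, contradicting $s^i_j\tau\neq 0$. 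So the representation theorem for two-dimensional neat reducts is the missing key lemma: it is what turns the semantic sketch into a proof, and without it (or an explicit equational derivation, which you do not supply and which would in effect have to re-prove that theorem for this instance) your proposal does not establish the claim.
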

\begin{proof}
Consider the following system $E(X,Y)$ of equations:

$\begin{array}{ll}
&X\cdot y=0, \ X\not=0, \\
&c_iX=c_iY \ \text{ for }i\in 2,\\
&c_i(d_{01}\cdot c_kX)\cdot c_kX\leq d_{01} \ \text{ for }\{i,k\}=2.
\end{array}$

Let $\eta\myeq\eta(x)\myeq y\cdot\chi(x)$, $\chi\myeq\chi(x)$ and $\tau\myeq\tau(x)$. First we show that $E(\tau,\eta)$ holds in $\mathfrak{Fr}_m\mathrm{CA}_{\alpha}$. 
\begin{enumerate}
\renewcommand{\theenumi}{(\Alph{enumi})}
\renewcommand{\labelenumi}{\theenumi}
\item $\tau\cdot\eta=0$ since $\tau\leq x$ and $\eta\leq -x$.
\item $\tau\not=0$ by Claim~\ref{firstclaim}.
\item Let $i\in 2$. By $c_{(2)}\chi=\chi$, we have $c_i\tau=c_i(x\cdot c_{(2)}\chi)=c_ix\cdot\chi$ and similarly $c_i\eta=c_iy\cdot\chi$, hence $c_i\tau\oplus c_i\eta=(c_ix\oplus c_iy)\cdot \chi=0$ since $\chi\leq -c_{(2)}(c_ix\oplus c_iy)$. This implies $c_i\tau=c_i\eta$. 
\item Let $i,k\in\alpha$ be such that $\{i,k\}=2$. By $c_{(2)}\chi=\chi$, we again have 
$$c_i(d_{01}\cdot c_k\tau)\cdot c_k\tau=[c_i(d_{01}\cdot c_kx)\cdot c_kx]\cdot\chi\leq d_{01}$$
by $\chi\leq -c_{(2)}(c_i(d_{01}\cdot c_kx)\cdot c_kx-d_{01})$.
\end{enumerate}
Let $i,j\in \alpha\sim 2$ be such that $i\not=j$. Let $s^i_j\tau\myeq c_i(\tau\cdot d_{ij})$ and $s^i_j\eta\myeq c_i(\eta\cdot d_{ij})$. Assume that $\tau\cdot d_{ij}\not=0$. Then $s^i_j\tau\not=0$. Now, by $\{i,j\}\cap 2=\emptyset$ and by \cite[Section 1.5]{hmt1}, we have that $E(s^i_j\tau,s^i_j\eta)$ also holds in $\mathfrak{Fr}_m\mathrm{CA}_{\alpha}$. Let $\mathfrak{R}\myeq\mathfrak{Rd}_{2\cup\{i\}}\mathfrak{Fr}_m\mathrm{CA}_{\alpha}$ be the reduct of $\mathfrak{Fr}_m\mathrm{CA}_{\alpha}$ resulting by ignoring the operations that contain indices in $\alpha\sim(2\cup\{i\})$. Consider the neat reduct $\mathfrak{C}=\mathfrak{Nr}_2\mathfrak{R}$. Then, $\mathfrak{C}\in\mathrm{Gs}_2$ by \cite[Theorem 3.2.65]{hmt2}. Let $\tau'\myeq s^i_j\tau$ and $\eta'\myeq s^i_j\eta$. Then $\tau',\eta'\in C$ and $E(\tau',\eta')$ holds in $\mathfrak{C}$. This is a contradiction since $\mathfrak{C}\in\mathrm{Gs}_2$ and it is not diificult to verify that $E(X,Y)$ fails in $\mathfrak{C}\in\mathrm{Gs}_2$ for every $X,Y\in C$. That means our assumption $\tau\cdot d_{ij}\not=0$ cannot hold, i.e., $\tau\leq -d_{ij}$.
\end{proof}
Therefore, there is $x\in\a{Fr}_m\mathrm{CA}_{\alpha}$ such that $x\not=0$ and $x\leq -d_{ij}$ for every $i,j\in\alpha\sim 2$ with $i\not=j$. We note that this proof works to prove Theorem~\ref{nemadd} if $\alpha$ is any arbitrary ordinal, but the theorem is interesting only for the case $\alpha\geq\omega$.
\end{proof}

\end{document}